\pgfplotsset{compat=newest}
\newcommand{\beq}{\begin{equation}}
\newcommand{\eeq}{\end{equation}}
\newcommand{\bqa}{\begin{eqnarray}}
\newcommand{\eqa}{\end{eqnarray}}
\definecolor{maroon}{rgb}{0.7,0,0}
\definecolor{ngreen}{rgb}{0.3,0.7,0.3}
\definecolor{golden}{rgb}{0.8,0.6,0.1}
\DeclareMathOperator*{\argmin}{arg\,min}
\newtheorem{prop}{\indent Proposition}
\newtheorem{theorem}{\indent Theorem}
\newtheorem{lemma}{\indent Lemma}
\newtheorem{corollary}{\indent Corollary}
\newtheorem{assumption}{\indent Assumption}
\newtheorem{myremark}{\indent Remark}
\newenvironment{remark}{\begin{myremark}\normalfont}
	{\end{myremark}}
\begin{document}
%
% paper title
% Titles are generally capitalized except for words such as a, an, and, as,
% at, but, by, for, in, nor, of, on, or, the, to and up, which are usually
% not capitalized unless they are the first or last word of the title.
% Linebreaks \\ can be used within to get better formatting as desired.
% Do not put math or special symbols in the title.
\title{Quantized Primal-Dual Algorithms for Network Optimization with Linear Convergence}
%
%
% author names and IEEE memberships
% note positions of commas and nonbreaking spaces ( ~ ) LaTeX will not break
% a structure at a ~ so this keeps an author's name from being broken across
% two lines.
% use \thanks{} to gain access to the first footnote area
% a separate \thanks must be used for each paragraph as LaTeX2e's \thanks
% was not built to handle multiple paragraphs
%

\author{Ziqin Chen, Shu Liang, Li Li and Shuming Cheng
   \thanks{This work was supported in part by National Natural Science Foundation of China under Grant 61903027 and 72171172, by National Key R\&D Program of China under Grants 2018YFE0105000 and 2018YFB1305304, by Shanghai Municipal Science and Technology Major Project under Grant 2021SHZDZX0100, by Shanghai Municipal Science and Technology Fundamental Project under Grant 21JC1405400, and by the Shanghai Municipal Commission of Science and Technology under Grants 1951113210 and 19511132101.}
   \thanks{Ziqin Chen, Shu Liang, Li Li and Shuming Cheng are with the Department
	of Control Science and Engineering, Tongji University, Shanghai, 201210, China (e-mail: cxq0915@tongji.edu.cn; sliang@tongji.edu.cn; lili@tongji.edu.cn; shuming\_cheng@tongji.edu.cn).}}% <-this % stops a space

\maketitle
\vspace{-4em}
\begin{abstract}
 This paper studies the network optimization problem about which a group of agents cooperates to minimize a global function under practical constraints of finite bandwidth communication. Particularly, we propose an adaptive encoding-decoding scheme to handle the constrained communication between agents. Based on this scheme, the continuous-time quantized distributed primal-dual (QDPD) algorithm is developed for network optimization problems. We prove that our algorithms can exactly track an optimal solution to the corresponding convex global cost function at a linear convergence rate. Furthermore, we obtain the relation between communication bandwidth and the convergence rate of QDPD algorithms. Finally, an exponential regression example is given to illustrate our results.
\end{abstract}

% Note that keywords are not normally used for peerreview papers.
\begin{IEEEkeywords}
Distributed convex optimization, quantized communication, linear convergence rate, primal dual algorithm.
\end{IEEEkeywords}
\vspace{-0.7em}

% For peer review papers, you can put extra information on the cover
% page as needed:
% \ifCLASSOPTIONpeerreview
% \begin{center} \bfseries EDICS Category: 3-BBND \end{center}
% \fi
%
% For peerreview papers, this IEEEtran command inserts a page break and
% creates the second title. It will be ignored for other modes.
\IEEEpeerreviewmaketitle

\section{Introduction}
%分布式优化
The network optimization problem has been intensely studied in various fields, such as the large-scale machine learning ~\cite{machinelearning1,machinelearning2} and the economic dispatch in power systems~\cite{economicdispatch1,economicdispatch2}. Typically, it can be formulated as minimizing a global function composed of a sum of local functions, each of which is assessed by a local agent in the network. Correspondingly, a large number of distributed algorithms~\cite{DGD,stronglyconvex1,stronglyconvex2,stronglyconvex3,renwei,wenguanghui,liangshu} have been proposed to solve this problem. For example, Nedić \textit{et al.} proposed a distributed gradient descent algorithm with sublinear convergence rate~\cite{DGD}. It was then improved to a linear convergence rate for the strongly convex optimization problem in~\cite{stronglyconvex1,stronglyconvex2,stronglyconvex3}. Further, a distributed primal-dual (PD) algorithm with linear convergence was developed for solving certain convex optimization problem~\cite{liangshu}.

%量化必要性
Note that almost all the above distributed algorithms for network optimization require each agent to interchange information with its neighbors. However, if physical constraints, such as sensor battery powers and computing resources, are considered, then it becomes impractical to assume the infinite-bandwidth communication in those algorithms. With regard to the problem of finite-bandwidth communication, one effective tool is the quantization scheme capable of compressing the messages exchanged among agents. Hence, the quantized distributed algorithms have appealed much interest in recent years~\cite{neighborhood1,neighborhood2,yipeng,Qstrongconvex1,Qstrongconvex2,Qconvex1,Qconvex3,Qconvex2,Qcontinuous1,Qcontinuous2,Qcontinuous3}. For example, distributed algorithms with static quantizers can converge to the neighborhood of an optimal solution~\cite{neighborhood1,neighborhood2}, while exact optimal solution can be achieved by using a dynamic quantizer~\cite{yipeng}. A linear convergence rate can be guaranteed via a uniform quantizer in distributed alternating direction method of multipliers~\cite{Qstrongconvex1}, and the same convergence rate can be achieved by the quantized gradient tracking algorithm~\cite{Qstrongconvex2}. However, the strong convexity assumption was imposed~\cite{Qstrongconvex1,Qstrongconvex2}. Relaxing the above strong assumption to be convex, the quantized distributed subgradient descent algorithm with a sublinear convergence rate $O(\ln k/\sqrt{k})$ has been developed to track the optimal solution~\cite{Qconvex1,Qconvex3}. Further, it has been improved to the convergence rate $O(1/k)$ by using the amplified-differential
compression method~\cite{Qconvex2}. Hence, there still lack any distributed linearly convergent algorithm for nonstrongly convex optimization problems over multi-agent networks, under the practical constraint of finite-bandwidth communication. 

In this paper, we propose a novel quantized distributed optimization algorithm. It achieves a linear convergence rate for the convex optimization problems. Also, we present a fully quantitative analysis, showing that the fast convergence rate of the proposed
algorithm will linearly increase the demand for communication bandwidth. The main contributions of this work are summarized as follows. 

\begin{itemize}
\item A novel adaptive encoding and decoding scheme is introduced to deal with the problem of finite bandwidth communication. Following this scheme, we present continuous-time quantized distributed primal-dual (QDPD) algorithms to solve the convex network optimization problem via quantization communication.

\item 
Our QDPD algorithm can converge to an exact optimal solution rather than near optimal solutions in~\cite{neighborhood1,neighborhood2}. Moreover, it admits a linear convergence rate, which improves the sublinear rate for other quantized distributed algorithms. All our results are derived under the mild assumption that the cost functions satisfy metric subregularity, which is weaker than the strong convexity assumption imposed in~\cite{stronglyconvex1,stronglyconvex3,Qstrongconvex1,Qstrongconvex2}.	

\item The relation between the communication bandwidth and the convergence rate is obtained. Particularly, given the communication bandwidth, we can always find a lower bound for the convergence rate of our algorithms. It provides theoretical support for transmission bandwidth settings required by different convergence rates. 
\end{itemize}

The rest of the paper is organized as follows. Section~\ref{problemstatement} briefly formulates the distributed convex optimization problem. In Section~\ref{result}, subjected to the constraint of finite-bandwidth communication, we introduce the adaptive encoder-decoder scheme and present the QDPD algorithm, together with the convergence analysis. Section~\ref{example} provides an example to validate our results, and Section \ref{conclusion} concludes this work.

\textit{Notations:}
A $n$-dimensional vector is written as $x\!\in\!{\mathbb{R}^{n}}$, and $\|x\|$ represents its Euclidean norm. For vectors $x_{1},\!\cdots\!,x_{N}$, denote the stacked column vector by $\boldsymbol{x}\!\!=\![x_{1};\!\cdots\!;x_{N}]$. For a given positive number $a\!\in\!{\mathbb{R}}$, $\lceil a \rceil$ is the smallest integer greater than or equal to $a$. For a set $C\subset \mathbb{R}^{n}$ and a point $x\in{\mathbb{R}^{n}}$, the distance from $x$ to $C$ is defined as $d(x,C)=\inf_{y\in{C}}\|y-x\|$. For a continuously differentiable function $f(x)$, denote by $\nabla f(x)$ its gradient vector. For a map $F:R^{n}\rightarrow R^{n}$, define the epigraph of $F$ as $\text{gph} F=\{(x,y)\in{\mathbb{R}^{n}\times\mathbb{R}^{n}}|y=F(x)\}$.

\section{Problem Formulation}\label{problemstatement}
Consider the network optimization problem in which $N$ agents cooperate to optimize the cost function as
\begin{eqnarray}
\mathop{\text{min}}\limits_{x\in{\mathbb{R}^{n}}} f(x),~~f(x)=\sum_{i=1}^{N}f_{i}(x).
\label{primalproblem}
\end{eqnarray}
Each cost function $f_{i}(x): \mathbb{R}^{n}\rightarrow \mathbb{R}$ is locally processed by an agent $i$ in the network. To solve the above problem, some reasonable assumptions are needed.
\begin{assumption} 
Each local cost function $f_{i}(x)$ is differentiable, convex, and $m_{f_{i}}$-smooth for some $m_{f_{i}}\!>\!0$. 
\end{assumption}
\begin{remark}
Assumption 1 implies the global function $f(x)$ is differentiable, convex, and $m_f$-smooth with $m_{f}\!\!=\!\sum_{i=1}^{N}\!m_{f_{i}}$. It ensures the existence of an optimal solution of problem~\eqref{primalproblem}, commonly used in most relevant works~\cite{Qconvex1,Qconvex2,Qconvex3}.
\end{remark}
Based on graph theory, the interconnection network can be described by an undirected and connected graph $\mathcal{G}\!=\!(\mathcal{V},\mathcal{E})$, where $\mathcal{V}$ is the set of agents with cardinality $|\mathcal{V}|=N$, and $\mathcal{E}\!\subset\! \mathcal{V}\!\times\!\mathcal{V}$ is the corresponding edge set. Denote by $A\!=\![a_{ij}]_{N\!\times\!{N}}$ the adjacency matrix of graph $\mathcal{G}$ with $a_{ij}\!=\!1$ if $(i,j)\!\in\! \mathcal{E}$, otherwise $a_{ij}=0$. For an arbitrary agent $i$, $\mathcal{N}_{i}:=\{j\in{\mathcal{V}\big{|}(i,j)\in \mathcal{E}}\}$ corresponds to the set of its neighbors with cardinality $|\mathcal{N}_{i}|=d_{i}$. Then, the Laplacian matrix is defined as $L_{\mathcal{G}}\!=\!D-A$ with $D=\text{diag}(d_{1},\cdots,d_{N})$, and its eigenvalues are arranged in an ascending order as $0=\sigma_{1}\leq\cdots\leq\sigma_{N}$. 

Using primal dual methods~\cite{kkt1,kkt2,liangshu} enable us to reformulate the above optimization problem~\eqref{primalproblem} as 
\begin{eqnarray}
	\mathop{\text{max}}\limits_{\boldsymbol{\lambda}\in{\mathbb{R}^{Nn}}}\mathop{\text{min}}\limits_{\boldsymbol{x}\in{\mathbb{R}^{Nn}}} \{f(\boldsymbol{x})+\boldsymbol{\lambda}^{T}\boldsymbol{L}_{\mathcal{G}}\boldsymbol{x}\!+\!\boldsymbol{x}^{T}\boldsymbol{L}_{\mathcal{G}}\boldsymbol{x}\},\label{dualproblem}
\end{eqnarray}
where the primal variables are $\boldsymbol{x}\!\!=\![x_{1};\!\cdots\!;x_{N}]$, the dual variables $\boldsymbol{\lambda}\!\!=\![\lambda_{1};\!\cdots\!;\lambda_{N}]$, and the augmented Laplacian matrix $\boldsymbol{L}_{\mathcal{G}}\!\!=\!L_{\mathcal{G}}\otimes I_{n}$. If $(\boldsymbol{x}^*,\boldsymbol{\lambda}^*)$ is a pair of optimal solution for the problem~\eqref{dualproblem}, then it satisfies the KKT condition
\begin{flalign}
	\left\{\begin{array}{l}\boldsymbol{0}\!=\!-\nabla f(\boldsymbol{x}^*)-\!\boldsymbol{L}_{\mathcal{G}}\boldsymbol{x}^*-\boldsymbol{L}_{\mathcal{G}}\boldsymbol{\lambda}^*, \\ 
		\boldsymbol{0}\!=\!\boldsymbol{L}_{\mathcal{G}}\boldsymbol{x}^*.\end{array}\right.\label{kkt1}
\end{flalign}
We further define 
\begin{flalign}
	\mathbf{F}(\boldsymbol{z})\!\!=\!\left[\begin{matrix}
		\nabla f(\boldsymbol{x})\!\!+\!\boldsymbol{L}_{\mathcal{G}} \boldsymbol{x}\!\!+\!\boldsymbol{L}_{\mathcal{G}} {\boldsymbol{\lambda}}\\
		\!\!-\!\boldsymbol{L}_{\mathcal{G}}{\boldsymbol{x}}
	\end{matrix}\right]\!\!\in\!{\mathbb{R}^{2Nn}},\label{Fz}
\end{flalign} 
where $\boldsymbol{z}\!\!=\![\boldsymbol{x};\boldsymbol{\lambda}]\!\in\!{\mathbb{R}^{2Nn}}$, and impose a weak assumption on $\mathbf{F}(\boldsymbol{z})$.
 \begin{assumption}
	$\mathbf{F}(\boldsymbol{z})$ in \eqref{Fz} is $\kappa$-metrically subregular at point $(\boldsymbol{z}^*,0_{2Nn})\in \text{gph}\mathbf{F}$, that is, for some positive constant $\kappa$, there exists an open set $C\supset Z^*$ such that 
	\begin{eqnarray}
		\left\|\mathbf{F}(\boldsymbol{z})\right\|\geq \kappa^{-1}\text{d}(\boldsymbol{z},Z^*),~ \forall \boldsymbol{z}\in C,\label{definition1}
	\end{eqnarray}
where $\boldsymbol{z}^*\!\!=\![\boldsymbol{x}^*;\boldsymbol{\lambda}^*]\!\!\in\! Z^*$ with $Z^*\!\!=\!\{\boldsymbol{z}|\mathbf{F}(\boldsymbol{z})\!\!=\!0\}$.
\end{assumption}
\begin{remark}
Assumption 2 ensures the linear convergence rate of our proposed algorithm, which will be discussed in the subsequent section. Note that it is weaker than the strong convexity assumption in~\cite{Qstrongconvex1,Qstrongconvex2} and could be easily satisfied when the epigraph of the map $\mathbf{F}$ is polyhedral form~\cite{liangshu}. 
\end{remark}

Since the useful quantization communication scheme is used to handle the finite bandwidth constraint imposed in the network problem~\eqref{dualproblem}, that is, each agent broadcasts quantized information by an encoder to and receives quantized information by a decoder from its neighbors, an extra assumption on quantization scheme is further assumed. 

\begin{assumption} For every $i\in{\mathcal{V}}$,
	the optimal solution of problem \eqref{primalproblem} satisfies $\|x^*\|_{\infty}\leq M_{1}$ and the corresponding gradient satisfies $\| \nabla {f}_{i}(x^*)\|_{\infty}\leq M_{2},~i\in{\mathcal{V}}$.
\end{assumption}
 Assumption 3 guarantees that the quantization scheme is persistently excited such that our QDPD algorithm works well even under the non-ideal communication. This is similar to Assumption 2 in~\cite{yipeng} and Assumption 3 in~\cite{Qcontinuous1}, respectively.

\section{Main Result}\label{result}
In this section, to solve problem~\eqref{primalproblem}, or equivalently~\eqref{dualproblem}, we design the QDPD algorithm, based on a novel encoder-decoder scheme to model the quantized communication among agents in Subsection \ref{Ad}. Then, we prove the linear convergence of the proposed QDPD algorithm using the Lyapunov method in Subsection \ref{Ca}. Finally, in Subsection \ref{Ba}, we analyze the relationship between the convergence rate and the required communication bandwidth of the QDPD algorithm.

\vspace{-0.3em}
\subsection{The QDPD Algorithm}\label{Ad}
For the adaptive encoder-decoder scheme used in our QDPD algorithm, both the prime and dual variables are quantized and then transmitted over the network graph $\mathcal{G}$. Thus, it significantly reduces the communication bandwidth, in comparison to the distributed PD algorithm requiring infinity precise transmission~\cite{liangshu}. Specifically, the QDPD algorithm can be decomposed into two procedures:

\textbf{1. Quantized Communication Step}

i) Quantizer: Denote the quantizer by $Q^{L}_{[l,u]}$ where $[l,u],~l\leq u$ characterizes the quantization range and $L+1$ defines the quantization level. As shown in Fig.~1, for a scalar $s\in[l,u]$, the quantized state is written as
\begin{equation}
	Q^{L}_{[l,u]}(s)=\argmin\limits_{0\leq i\leq L}\left\{\left| l+i\frac{u-l}{L}\right|-s\right\}.\label{Q}
\end{equation}
It follows from the above equation that $s$ is encoded into the set $\{0,1,\!\cdots,\!L\}$ and thus needs $\log_{2}(L)$ bits to transmit if no-transmission is required at the zero level. 

ii) Encoder:
We adopt the periodic sampling with period $T$ to sample the continuous-time input of the encoder $z_{j}(t)=[x_{j}(t);\lambda_{j}(t)],~t\geq0$ and thus obtain the discrete variables $z_{j}(kT)=[x_{j}(kT);\lambda_{j}(kT)],~k\geq0$ for any $j\in{\mathcal{V}}$. Then, the output is quantized via the above  quantization scheme into  $q^{z}_{j}(kT)=[q^{x}_{j}(kT),q^{\lambda}_{j}(kT)]$ with
\begin{flalign}
	q^{z}_{j}(kT)=Q^{L}_{\mathcal{P}^{z}_{j}(k)}(z_{j}(kT)),\label{qq}
\end{flalign}
where the quantization range $\mathcal{P}^{z}_{j}(k)$ will be explicitly given in the following subsection. Consequently, each agent $j$ encodes $q^{z}_{j}(kT)$ into a sequence of $2n\log_{2}(L)$ bits as  $q^{z(b)}_{j}(k)$ and then transmits it to its neighbors.

 iii) Decoder:
If an agent $i$ receives the quantized data ${q}_{j}^{z(b)}(k)$, then it can be recovered via the known quantization range $\mathcal{P}^{z}_{j}(k),~j\in{\mathcal{N}_{i}}$, which is easily obtained by using ${q}^{z}_{j}((k-1)T)$.
	\vspace{-1em}
\begin{figure}[H]
	\centering
	% Requires \usepackage{graphicx}
	\includegraphics[height=1.3cm,width=6cm]{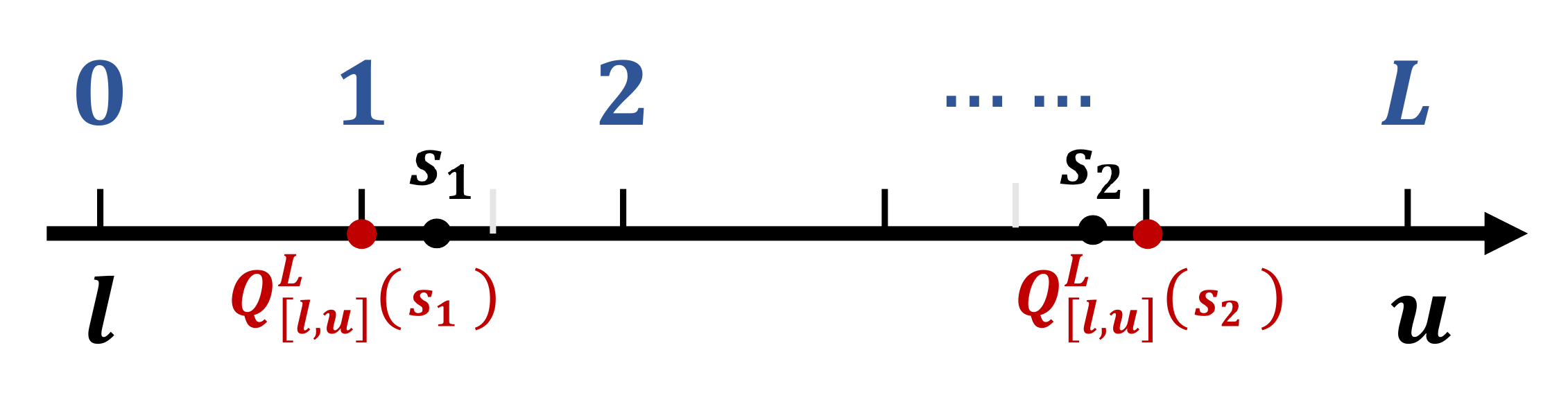}
	\vspace{-0.5em}
	\caption{Quantization scheme}\label{fignetwork}
\end{figure}
	\vspace{-1.5em}
\textbf{2. State Update Step:}

Following from the quantized communication step, each agent updates its states via
\begin{flalign}
		\dot{x}_{i}(t)
		&=\nabla f_{i}\left(x_{i}(t)\right)\!\!-\!\!\!\!\sum\limits_{j\in{\mathcal{N}_{i}}}\left(q^{x}_{i}(t)\!\!-\!q^{x}_{j}(t)\right)\!\!-\!\!\!\!\sum\limits_{j\in{\mathcal{N}_{i}}}\left(q^{\lambda}_{i}(t)
		\!-\!q^{\lambda}_{j}(t)\right),\nonumber\\
		\dot{\lambda}_{i}(t)
		&=\sum\limits_{j\in{\mathcal{N}_{i}}}\left(q^{x}_{i}(t)\!-\!q^{x}_{j}(t)\right).\label{distributed algorithm}
\end{flalign}

Combining with above two steps gives rise to the QDPD algorithm, which is summarized as Algorithm 1. It is developed from the distributed PD method~\cite{liangshu} suitable for the ideal communication scenario. Noting that each agent only transmits the quantized binary sequence at each iteration step, it significantly reduces the source occupancy of communication. 

Given the initial states $\boldsymbol{x}(0)$ and $\boldsymbol{\lambda}(0)$, the key parameters in our QDPD algorithm can be explicitly given as follows.

i) The sampling period $T$ can be any positive constant satisfying
\begin{flalign}
	(e^{\sqrt{\frac{3+\sqrt{5}}{2}}\sigma_{N}T}\!-\!1)(e^{\frac{\eta}{2}T}\!-\!1)\rho\!\leq\! c_{1}\!<\!1,~0<c_{1}<1, \label{T}
\end{flalign}
where $\rho\!=\!\frac{\sqrt{2}(6\sigma_{N}\!+\!2m_{f})\kappa\sqrt{(m_{f}\!+\!4\sigma_{N})(4\sigma_{N}\!+\!11)}}{\eta\sqrt{3\!+\!\sqrt{5}}\sqrt{\sigma_{N}\!-\!\sigma_{N}\kappa^2(\eta\!+\!4)(m_{f}\!+\!4\sigma_{N})}}$ and the parameter $\eta\!=\!\frac{\beta}{\kappa^2(m_{f}+6\sigma_{N})}$.

ii) The dynamic length of quantization range is
\begin{flalign}
	l(k)=l(0)e^{-\frac{\eta}{2}kT}, \label{lk}
\end{flalign}
where $l(0)=\frac{\sqrt{2}c_{2}M_{0}}{\kappa\sigma_{N}}\sqrt{\frac{3\!-\!\kappa^2(3\eta\!+\!4)(m_{f}\!+\!6\sigma_{N})}{Nn(12\sigma_{N}\!+\!33)}}e^{-\frac{\eta}{2}T-\sqrt{\frac{3+\sqrt{5}}{2}}\sigma_{N}T}$ with $M_{0}\!\!=\!\sqrt{2Nn}M\!\!+\!M'$, $M\!\!\geq\! \|\boldsymbol{z}(0)\|_{\infty}$, $M'\!\!\geq\!\!\frac{1_{Nn}^{T}\boldsymbol{\lambda}(0)}{\sqrt{Nn}}\!\!+\!(\frac{\sqrt{Nn}}{\sigma_{2}}\!\!+\!\sqrt{Nn})M_{1}\!\!+\!\sqrt{Nn}M_{2}$, $0<c_{2}<1-c_{1}$ and $\beta\in(0,1)$.

iii) The quantization level $L$ can be any positive constant satisfying
\begin{flalign}
	L\geq\!\left\lceil\max\left\{\frac{2M_{0}}{l(0)},\frac{\sqrt{2Nn}}{c_{2}}e^{\frac{\eta}{2}T\!+\!\sqrt{\frac{3+\sqrt{5}}{2}}\sigma_{N}T}\right\}\right\rceil, \label{L}
\end{flalign}
	
iv) The quantization range is determined by
\begin{flalign}
	\left\{\begin{array}{l}\!\!\!
		\mathcal{P}^{z}_{j}(0)\!\!=\!\!\left[-\frac{Ll(0)}{2}1_{2n},\frac{Ll(0)}{2}1_{2n}\right],\\	
		\!\!\!\mathcal{P}^{z}_{j}(k)\!\!=\!\!\left[{q}^{z}_{j}((k\!-\!1)T)\!-\!\frac{Ll(k)}{2}1_{2n}\!,~{q}^{z}_{j}((\!k\!-\!1)T)\!+\!\frac{Ll(k)}{2}1_{2n}\right].\end{array}\right.\nonumber
\end{flalign} 

\vspace{-0.5em}
\begin{table}[H]
	\renewcommand\arraystretch{1.2}
	%\renewcommand{\arraystretch}{1.4}
	% \caption{Dataset Similarity Measurement Based on Euclidean Distance}
	\label{dsmb}
	\centering
	\footnotesize
	\begin{tabular}{p{0.95\linewidth}}
		\Xhline{1.0pt}
		{\textbf{Algorithm 1} the QDPD algorithm at agent $i$} \\ \hline 	
		\textbf{Initialization:} 
		\begin{itemize}
			\item Initialize $x_{i}(0)$, $l(0)$, $L$, $T$, $f_{i}(x)$, $\mathcal{P}^{z}_{i}(0)$ and $\mathcal{P}^{z}_{j}(0)$.
			\item Receive the initial quantized data $q_{j}^{z(b)}(0)$ from neighbor $j$ and recover $q_{j}^{z}(0)=Q^{L}_{\mathcal{P}^{z}_{j}(0)}(z_{j}(0))$.  Set $q^{x}_{i}(0)=Q^{L}_{\mathcal{P}^{x}_{i}(0)}(x_{i}(0)).$
			\item Compute dual variables $\lambda_{i}(0)=\sum_{j\in{\mathcal{N}_{i}}}\left(q^{x}_{i}(0)-q^{x}_{j}(0)\right)$ and the corresponding quantized states $q^{\lambda}_{i}(0)=Q^{L}_{\mathcal{P}^{\lambda}_{i}(0)}(\lambda_{i}(0)).$ 
		\end{itemize}
		%\boldsymbol{q}_{i}^{x}(0)
		1: \textbf{For} times $k=0,1,2,\cdots$ \textbf{do}\\
		2: Encode $q_{i}^{z}(kT)$ into $q_{i}^{z(b)}(k)$  and send it to neighbors.\\
		3: Recover the continuous-time signal from the discrete-time signal by
		\vspace{-0.5em}
		$$q_{j}^{z}(t)\!=\!q_{j}^{z}(kT),~kT\leq t<(k+1)T,~j\in{\mathcal{N}_{i}\cup \{i\}}.$$
		4: Update  $x_{i}(t),~\lambda_i(t),~kT\leq t\leq(k+1)T$ by
		\begin{eqnarray}
			\left\{\begin{array}{l}
				\dot{x}_{i}(t)
				\!=\!\nabla f_{i}\left(x_{i}(t)\right)\!\!-\!\!\!\!\sum\limits_{j\in{\mathcal{N}_{i}}}\left(q^{x}_{i}(t)\!\!-\!q^{x}_{j}(t)\right)\!\!-\!\!\!\!\sum\limits_{j\in{\mathcal{N}_{i}}}\left(q^{\lambda}_{i}(t)
				\!-\!q^{\lambda}_{j}(t)\right),\\
				\dot{\lambda}_{i}(t)
				\!=\!\sum\limits_{j\in{\mathcal{N}_{i}}}\left(q^{x}_{i}(t)\!-\!q^{x}_{j}(t)\right)
				.\end{array}\right.\nonumber
		\end{eqnarray}
		5. Set the dynamic length of quantization range $l(k+1)$ from \eqref{lk}.\\
		6: Compute the quantization range
		$$\mathcal{P}_{i}^{z}(k+1)=\left[{q}^{z}_{i}(kT)\!-\!\frac{Ll(k+1)}{2}1_{2n},{q}^{z}_{i}(kT)\!+\!\frac{Ll(k+1)}{2}1_{2n}\right].$$ 
		7: Quantize local state information $$q^{z}_{i}((k+1)T)=Q^{L}_{\mathcal{P}^{z}_{i}(k)}(z_{i}((k+1)T)).$$
		8: Receive $q_{j}^{z(b)}(k+1)$ from neighbors $j\in{\mathcal{N}_{i}}$ and recover $q_{j}^{z}((k+1)T)$ by using known
		$\mathcal{P}^{z}_{j}(k+1)$.\\
		9: \textbf{end} \\
		\Xhline{0.9pt}	
	\end{tabular}
\end{table}

\subsection{Convergence Analysis}\label{Ca}
 We first introduce the following lemma to associate the equilibrium of dynamics~\eqref{distributed algorithm} with the optimal primal-dual solution pair.
\begin{lemma}
Under Assumption 1, the primal variables $x_{i}(t)$ and dual variables $\lambda_{i}(t)$ can be written in a stacked form of
	\begin{equation}
		\left\{\begin{array}{l}\dot{\boldsymbol{x}}(t)=-\nabla f(\boldsymbol{x}(t))-\boldsymbol{L}_{\mathcal{G}} \boldsymbol{q}^{\boldsymbol{x}}(t)-\boldsymbol{L}_{\mathcal{G}}\boldsymbol{q}^{\boldsymbol{\lambda}}(t), \\ 
			\dot{\boldsymbol{\lambda}}(t)=\boldsymbol{L}_{\mathcal{G}}\boldsymbol{q}^{\boldsymbol{x}}(t),\end{array}\right.\label{compactalgorithm}
	\end{equation}
where $\boldsymbol{q}^{\boldsymbol{x}}\!\!=\![q_{1}^{x};\cdots;q_{N}^{x}]\!\in\!{\mathbb{R}^{Nn}}$ and $\boldsymbol{q}^{\boldsymbol{\lambda}}\!\!=\![q_{1}^{\lambda};\cdots;q_{N}^{\lambda}]\!\in\!{\mathbb{R}^{Nn}}$. Then, the equilibrium of dynamics~\eqref{compactalgorithm} is a pair of optimal solutions of the problem \eqref{dualproblem}.
\end{lemma}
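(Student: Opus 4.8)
The plan is to handle the two assertions of the lemma separately: first that the per-agent updates assemble into the compact system~\eqref{compactalgorithm}, and then that every equilibrium of that system is an optimal primal-dual pair for~\eqref{dualproblem}.

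For the compact form I would argue by direct stacking. Writing the Laplacian as $L_{\mathcal{G}}=D-A$, the $i$-th $n$-dimensional block of $\boldsymbol{L}_{\mathcal{G}}\boldsymbol{q}^{\boldsymbol{x}}$ equals $d_iq^x_i-\sum_{j\in\mathcal{N}_i}q^x_j=\sum_{j\in\mathcal{N}_i}(q^x_i-q^x_j)$, and similarly for $\boldsymbol{q}^{\boldsymbol{\lambda}}$, the Kronecker factor $I_n$ in $\boldsymbol{L}_{\mathcal{G}}=L_{\mathcal{G}}\otimes I_n$ simply carrying the per-agent vector dimension. Concatenating the updates of Algorithm~1 (equivalently the state-update equations~\eqref{distributed algorithm}) over $i\in\mathcal{V}$, and collecting the local gradients into $\nabla f(\boldsymbol{x})=[\nabla f_1(x_1);\cdots;\nabla f_N(x_N)]$ — which is well defined and continuous by Assumption~1 — then reproduces~\eqref{compactalgorithm} line by line. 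This part is pure bookkeeping.

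For the second assertion, let $\boldsymbol{z}^*=(\boldsymbol{x}^*,\boldsymbol{\lambda}^*)$ be an equilibrium of~\eqref{compactalgorithm}. The first thing to establish is that at such a point the quantizer is exact, i.e.\ $\boldsymbol{q}^{\boldsymbol{x}}=\boldsymbol{x}^*$ and $\boldsymbol{q}^{\boldsymbol{\lambda}}=\boldsymbol{\lambda}^*$: since the state is stationary the encoder samples a constant sequence, and the adaptive scheme — the dynamic length $l(k)=l(0)e^{-\eta kT/2}\to 0$ from~\eqref{lk} together with the re-centering of $\mathcal{P}^{z}_j(k)$ at the previously decoded value — drives the quantization error to zero while keeping the range aligned with the state. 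Substituting $\boldsymbol{q}^{\boldsymbol{x}}=\boldsymbol{x}^*$, $\boldsymbol{q}^{\boldsymbol{\lambda}}=\boldsymbol{\lambda}^*$ and $\dot{\boldsymbol{x}}=\dot{\boldsymbol{\lambda}}=0$ into~\eqref{compactalgorithm} collapses it to $-\nabla f(\boldsymbol{x}^*)-\boldsymbol{L}_{\mathcal{G}}\boldsymbol{x}^*-\boldsymbol{L}_{\mathcal{G}}\boldsymbol{\lambda}^*=0$ and $\boldsymbol{L}_{\mathcal{G}}\boldsymbol{x}^*=0$, which is precisely the KKT system~\eqref{kkt1}. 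I would then convert this into optimality for~\eqref{dualproblem}: the identity $\boldsymbol{L}_{\mathcal{G}}\boldsymbol{x}^*=0$ is the first-order condition for the concave outer maximization over $\boldsymbol{\lambda}$, and, using $\boldsymbol{L}_{\mathcal{G}}\boldsymbol{x}^*=0$, the first KKT equation forces the gradient $\nabla f(\boldsymbol{x}^*)+2\boldsymbol{L}_{\mathcal{G}}\boldsymbol{x}^*+\boldsymbol{L}_{\mathcal{G}}\boldsymbol{\lambda}^*$ of the convex inner objective to vanish at $\boldsymbol{x}^*$; hence $(\boldsymbol{x}^*,\boldsymbol{\lambda}^*)$ is a saddle point and, by the convexity from Assumption~1, a pair of optimal solutions of~\eqref{dualproblem}. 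Finally, connectivity of $\mathcal{G}$ and $\boldsymbol{L}_{\mathcal{G}}\boldsymbol{x}^*=0$ give $\boldsymbol{x}^*=1_N\otimes x^*$, and premultiplying the first KKT equation by $1_N^{T}\otimes I_n$ (with $1_N^{T}L_{\mathcal{G}}=0$) yields $\sum_{i=1}^N\nabla f_i(x^*)=0$, so $x^*$ also solves the original problem~\eqref{primalproblem}.

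I expect the main obstacle to be the exactness-at-equilibrium claim $\boldsymbol{q}^{\boldsymbol{x}}=\boldsymbol{x}^*$, $\boldsymbol{q}^{\boldsymbol{\lambda}}=\boldsymbol{\lambda}^*$: unlike the rest of the argument this is not pure linear algebra but depends delicately on the encoder-decoder construction (the vanishing length $l(k)$ and the re-centering rule) acting on a stationary sampled signal, and one must be careful about exactly what ``equilibrium'' means for the sampled-data dynamics~\eqref{compactalgorithm}. The remaining pieces — the Laplacian stacking identity and the standard equivalence between KKT points of~\eqref{kkt1} and saddle points of the convex-concave reformulation — are routine.
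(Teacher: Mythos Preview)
The paper does not actually give a proof of this lemma: it simply states that ``Lemma 1 is similar to Lemma 4 of the work~\cite{liangshu}'' and defers to that reference, which treats the \emph{unquantized} primal-dual dynamics. So there is no in-paper argument to compare against beyond the implicit claim that the quantized case reduces to the unquantized one.

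Your proposal is correct and is, in effect, a fleshed-out version of what the paper leaves implicit. The stacking part is indeed pure bookkeeping with $\boldsymbol{L}_{\mathcal{G}}=L_{\mathcal{G}}\otimes I_n$ (modulo a sign typo on the gradient term in the per-agent update~\eqref{distributed algorithm}, which the compact form~\eqref{compactalgorithm} corrects). For the equilibrium part, the reduction to the KKT system~\eqref{kkt1} and then to a saddle point of~\eqref{dualproblem} is exactly the content of the cited Lemma~4 in~\cite{liangshu}; your convexity/connectedness argument is the standard one. The extra step you single out---showing $\boldsymbol{q}^{\boldsymbol{x}}=\boldsymbol{x}^*$ and $\boldsymbol{q}^{\boldsymbol{\lambda}}=\boldsymbol{\lambda}^*$ at equilibrium via the shrinking range $l(k)\to 0$ and the re-centering rule---is precisely the gap between the quantized and unquantized settings, and your sketch of it (constant sampled signal, vanishing quantization error, unsaturated quantizer) is the right mechanism. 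You are also right that ``equilibrium'' for a sampled-data system with time-varying encoder needs to be read as a stationary trajectory rather than an instantaneous fixed point; the paper glosses over this, but your interpretation is the natural one and makes the argument go through.
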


Since Lemma 1 is similar to Lemma 4 of the work~\cite{liangshu}, the proof is referred to that reference. 

Then, we prove the main result that our QDPD algorithm converges to an optimal solution linearly.
\begin{theorem}
Under Assumptions 1-3, the QDPD algorithm ensures that the states of all agents converge to an optimal solution to the problem \eqref{primalproblem} at rate $O(\gamma^{-t})$ with $\gamma=e^{\frac{\eta}{2}}>1$. 
\end{theorem}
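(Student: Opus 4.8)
The argument is a sampled-data Lyapunov analysis that runs two mutually dependent inductions over the sampling instants $\{kT\}_{k\ge0}$. \textbf{(C1)} The quantizer never saturates, so the quantization error $\boldsymbol{e}(t):=\boldsymbol{q}(t)-\boldsymbol{z}(t)$, with $\boldsymbol{q}=[\boldsymbol{q}^{\boldsymbol{x}};\boldsymbol{q}^{\boldsymbol{\lambda}}]$, satisfies $\|\boldsymbol{e}(kT)\|_{\infty}\le l(k)/2$. \textbf{(C2)} A Lyapunov function $V(t)$ comparable to $\text{d}(\boldsymbol{z}(t),Z^*)^2$ obeys $V(t)\le C\,e^{-\eta t}$ for a constant $C$ fixed by the initial data. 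Since $l(k)=l(0)e^{-\eta kT/2}$ decays at exactly half the rate of $V$, the two estimates reinforce each other, and the parameter choices \eqref{T}--\eqref{L} are precisely what makes this coupling self-consistent.

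\textbf{From the algorithm to a perturbed gradient flow.}
First I would use Lemma~1 to rewrite \eqref{compactalgorithm} as $\dot{\boldsymbol{z}}(t)=-\mathbf{F}(\boldsymbol{z}(t))+\mathbf{H}\boldsymbol{e}(t)$, where $\mathbf{F}$ is the saddle-point map \eqref{Fz} evaluated at the \emph{true} state (note that the gradient term in \eqref{compactalgorithm} uses $\boldsymbol{x}$; only the Laplacian-coupled terms use the quantized values) and $\mathbf{H}$ stacks the corresponding $\boldsymbol{L}_{\mathcal{G}}$ blocks, so $\|\mathbf{H}\|$ is a fixed multiple of $\sigma_N$. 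The homogeneous part $\dot{\boldsymbol{z}}=-\mathbf{F}(\boldsymbol{z})$ is the unquantized primal-dual flow of \cite{liangshu}: taking $V$ to be the Lyapunov function from that analysis — a positive-definite form comparable to $\text{d}(\boldsymbol{z}(t),Z^*)^2$ — Assumption~1 (monotonicity of $\mathbf{F}$ together with its Lipschitz continuity, the modulus controlled by $m_f$ and $\sigma_N$) combined with the metric-subregularity bound \eqref{definition1} upgrades the merely semidefinite descent into the strict decay $\dot V\le-\eta V$ along that flow; this is the step that fixes $\eta=\beta/(\kappa^2(m_f+6\sigma_N))$. Reinstating the perturbation and bounding the extra term by Cauchy--Schwarz and Young's inequality yields a differential inequality $\dot V(t)\le-\eta V(t)+c'\|\boldsymbol{e}(t)\|^2$ valid almost everywhere. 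Since $\boldsymbol{q}$ is held over $[kT,(k+1)T)$, on that interval $\|\boldsymbol{e}(t)\|_{\infty}\le\|\boldsymbol{e}(kT)\|_{\infty}+\|\boldsymbol{z}(t)-\boldsymbol{z}(kT)\|_{\infty}$, and integrating yields a one-step recursion $V((k+1)T)\le e^{-\eta T}V(kT)+c''\,l(k)^2$, which telescopes to \textbf{(C2)} as long as \textbf{(C1)} holds through step $k$.

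\textbf{The main obstacle: the no-saturation induction.}
Closing \textbf{(C1)} is where all four conditions \eqref{T}--\eqref{L} are actually consumed, and it is the crux of the argument. Assuming \textbf{(C1)}--\textbf{(C2)} up to step $k$, I must show that each sampled block $z_j((k+1)T)$ lies in $\mathcal{P}^{z}_{j}(k+1)=\left[\,q^{z}_{j}(kT)-\tfrac{Ll(k+1)}{2}1_{2n},\ q^{z}_{j}(kT)+\tfrac{Ll(k+1)}{2}1_{2n}\,\right]$. Splitting $z_j((k+1)T)-q^{z}_{j}(kT)=\left(z_j((k+1)T)-z_j(kT)\right)+\left(z_j(kT)-q^{z}_{j}(kT)\right)$, the second term is $\le l(k)/2$ in the $\infty$-norm by the inductive hypothesis, while the first is bounded by a Gr\"{o}nwall estimate on \eqref{compactalgorithm} over a single period whose effective growth constant is $\sqrt{(3+\sqrt{5})/2}\,\sigma_N$ — namely $\sigma_N$ times the spectral norm of the $2\times2$ block pattern coupling $(\boldsymbol{x},\boldsymbol{\lambda})$ to its own derivative — so the inter-sample drift is at most $(e^{\sqrt{(3+\sqrt{5})/2}\,\sigma_N T}-1)$ times the current state magnitude, which is in turn dominated in terms of $M_0$ using \textbf{(C2)} and the a priori bounds $\|x^*\|_{\infty}\le M_1$ and $\|\nabla f_i(x^*)\|_{\infty}\le M_2$ of Assumption~3. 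Requiring the two pieces to sum to at most $Ll(k+1)/2=\tfrac{L}{2}\,l(k)\,e^{-\eta T/2}$ is exactly inequality \eqref{T} together with the lower bound \eqref{L} on $L$; the base case $k=0$ is covered by the explicit choices \eqref{lk} and \eqref{L} through the definition of $M_0$ and $M\ge\|\boldsymbol{z}(0)\|_{\infty}$. Once \textbf{(C1)}--\textbf{(C2)} are established, $\text{d}(\boldsymbol{z}(t),Z^*)\le\sqrt{2C}\,e^{-\eta t/2}=O(\gamma^{-t})$ with $\gamma=e^{\eta/2}>1$, and Lemma~1 identifies the corresponding limit of $\boldsymbol{x}(t)$ as an optimal solution of \eqref{primalproblem}, establishing the claimed rate.
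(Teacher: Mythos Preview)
Your strategy matches the paper's: the same Lyapunov function $V=4\sigma_N V_1+V_2$ from \cite{liangshu} (comparable to $\text{d}(\boldsymbol{z},Z^*)^2$ via Proposition~2), the same decay mechanism through metric subregularity, and the same coupled induction with the inter-sample growth factor $e^{\bar\sigma_NT}$, $\bar\sigma_N=\sqrt{(3+\sqrt5)/2}\,\sigma_N$, controlling the quantizer range. The paper's Step~2 integrates the error dynamics $\dot{\boldsymbol{e}}=\bar{\boldsymbol{L}}_{\mathcal G}\boldsymbol{e}+(\text{forcing by }\tilde{\boldsymbol{z}})$ directly rather than bounding $\|\boldsymbol{z}(t)-\boldsymbol{z}(kT)\|$ as you do, but the two are equivalent up to constants.

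There is, however, a real gap in how you close \textbf{(C2)}. Your one-step recursion $V((k{+}1)T)\le e^{-\eta T}V(kT)+c''l(k)^2$ does \emph{not} telescope to $V(kT)=O(e^{-\eta kT})$: since $l(k)^2\propto e^{-\eta kT}$ decays at exactly the contraction rate, iterating yields $V(kT)\le(\text{const}\cdot k)\,e^{-\eta kT}$, and that extra factor of $k$ eventually overwhelms the shrinking range $Ll(k{+}1)/2$, breaking \textbf{(C1)}. The paper circumvents this by replacing integration with a positively-invariant-set argument (Step~1 of the Appendix): on the level set $\{V=a(t)\}$, conditional on $\|\boldsymbol{e}\|<b(t)$, the slack $\beta<1$ lets the residual term $-\tfrac{1-\beta}{2}\|\mathbf F\|^2$ absorb both the $+\tfrac43 V$ and the $\|\boldsymbol{e}\|^2$ forcing, giving $\dot V\le-\eta V$ exactly; a continuity bootstrap (Steps~1--4) then propagates both bounds simultaneously across each interval. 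You can repair your version by instead deriving the strictly stronger $\dot V\le-\eta'V+c'\|\boldsymbol{e}\|^2$ with $\eta'=\tfrac{1}{\kappa^2(m_f+6\sigma_N)}-\tfrac43>\eta$ (this is precisely the condition $3-\kappa^2(3\eta+4)(m_f+6\sigma_N)>0$ hidden in the definition of $l(0)$), after which your recursion does telescope to the claimed rate.
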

\vspace{-1em}
\begin{proof}
First, we construct a Lyapunov candidate function as
\begin{equation}
	V(\boldsymbol{z})=4\sigma_{N}V_{1}(\boldsymbol{z})+V_{2}(\boldsymbol{z}),\label{V}
\end{equation}
where
\begin{eqnarray}
	\left\{\begin{array}{l}
V_{1}(\boldsymbol{z})=\frac{1}{2}\|\boldsymbol{z}-\boldsymbol{z}^*\|^2,\nonumber\\
V_{2}(\boldsymbol{z})=f(\boldsymbol{x})-f(\boldsymbol{x}^*)+\frac{1}{2}\boldsymbol{x}^{T}\boldsymbol{L}_{\mathcal{G}}\boldsymbol{x}+\boldsymbol{\lambda}^{T}\boldsymbol{L}_{\mathcal{G}}\boldsymbol{x}.\end{array}\right.\nonumber
\end{eqnarray}
Note that if $V(\boldsymbol{z})$ is always positive and decays to zero at a linear convergence rate, then $\boldsymbol{z}(t)$ will linearly converge to $\boldsymbol{z}^*$. We now prove that $V(\boldsymbol{z})$ does satisfy these properties.

 Multiplying $(\boldsymbol{x}-\boldsymbol{x}^*)^T$ left by~\eqref{kkt1} and using the relation $(\boldsymbol{x}^*)^{T}\boldsymbol{L}_{\mathcal{G}}=0_{Nn}^{T}$ yields
\begin{eqnarray}
	\left\{\begin{array}{l}
		(\boldsymbol{x}-\boldsymbol{x}^*)^T\nabla f(\boldsymbol{x}^*)=-(\boldsymbol{x}-\boldsymbol{x}^*)^T\boldsymbol{L}_{\mathcal{G}}\boldsymbol{\lambda}^* ,\\ 
		\boldsymbol{x}^T\boldsymbol{L}_{\mathcal{G}}\boldsymbol{\lambda}=(\boldsymbol{x}-\boldsymbol{x}^*)^{T}\boldsymbol{L}_{\mathcal{G}}\boldsymbol{\lambda}.\end{array}\right.\nonumber
\end{eqnarray}
This immediately leads to
\begin{flalign}
\begin{split}
	V_{2}(\boldsymbol{z})&=f(\boldsymbol{x})-f(\boldsymbol{x}^*)-(\boldsymbol{x}-\boldsymbol{x}^*)^{T}\nabla f(\boldsymbol{x}^*)\!\!+\!\frac{1}{2}(\boldsymbol{x}\!\!-\!\boldsymbol{x}^*)^{T}\nonumber\\
	&~~~\boldsymbol{L}_{\mathcal{G}}(\boldsymbol{x}\!\!-\!\boldsymbol{x}^*)\!\!+\!(\boldsymbol{x}\!\!-\!\boldsymbol{x}^*)^{T}\boldsymbol{L}_{\mathcal{G}}(\boldsymbol{\lambda}\!\!-\!\boldsymbol{\lambda}^*).\nonumber
\end{split}&
\end{flalign}
It follows further from the convexity of $f(\boldsymbol{x})$ that $f(\boldsymbol{x})\!-\!f(\boldsymbol{x}^*)\!-\!(\boldsymbol{x}\!-\!\boldsymbol{x}^*)^{T}\nabla f(\boldsymbol{x}^*)\!\!\geq\! 0$. And there is $\frac{1}{2}(\boldsymbol{x}\!-\!\boldsymbol{x}^*)^{T}\boldsymbol{L}_{\mathcal{G}}(\boldsymbol{x}\!-\!\boldsymbol{x}^*)\!\geq\! 0$. as $\boldsymbol{L}_{\mathcal{G}}$ is positive semidefinite. Hence, we can obtain
\begin{eqnarray}
	V_{2}(\boldsymbol{z})
	\!\!\geq\!\!-\frac{\sigma_{N}}{2}(\|\boldsymbol{x}\!-\!\boldsymbol{x}^*\|^2\!\!+\!\|\boldsymbol{\lambda}\!\!-\!\boldsymbol{\lambda}^*\|^2)\!\geq\!-\frac{\sigma_{N}}{2}\|\boldsymbol{z}\!-\!\boldsymbol{z}^*\|^2,\label{v2u}
\end{eqnarray}
and prove that $V(\boldsymbol{z})\geq \frac{3\sigma_{N}}{2}\|\boldsymbol{z}-\boldsymbol{z}^*\|^2\geq0.$

Then, we prove that $V(\boldsymbol{z})$ converges to zero linearly. Denote the quantization error by $\boldsymbol{e}\!\!=\![\boldsymbol{e}_{\boldsymbol{x}};\boldsymbol{e}_{\boldsymbol{\lambda}}]$ with $\boldsymbol{e}_{\boldsymbol{x}}\!\!=\!\boldsymbol{x}-\boldsymbol{q}^{\boldsymbol{x}}$ and $\boldsymbol{e}_{\boldsymbol{\lambda}}\!\!=\!\boldsymbol{\lambda}-\boldsymbol{q}^{\boldsymbol{\lambda}}$. Combining with Eqs.~\eqref{kkt1}, \eqref{compactalgorithm}, and $(\boldsymbol{x}\!-\!\boldsymbol{x}^*)^T(\nabla f(\boldsymbol{x})\!-\!\nabla f(\boldsymbol{x}^*))\!\geq\! 0$ yields 
\begin{flalign}
	\dot{V}_{1}(\boldsymbol{z})\!\!
	%&=\!\!(\nabla V_{1}(\boldsymbol{z}))^{T}\dot{\boldsymbol{z}}\nonumber\\
&=\!\!-(\boldsymbol{x}\!-\!\boldsymbol{x}^*)^{T}(\nabla f(\boldsymbol{x})\!\!-\!\nabla f(\boldsymbol{x}^*))\!\!+\!\boldsymbol{x}^T\boldsymbol{L}_{\mathcal{G}}\boldsymbol{e_{x}}\!\!-\!\boldsymbol{x}^T\boldsymbol{L}_{\mathcal{G}}\boldsymbol{x}\nonumber\\
	\!\!&~~~+\!\!\boldsymbol{x}^T\boldsymbol{L}_{\mathcal{G}}\boldsymbol{e_{\lambda}}\!\!-\!(\boldsymbol{\lambda}-\boldsymbol{\lambda}^*)^T\boldsymbol{L}_{\mathcal{G}}\boldsymbol{e_{x}}.\nonumber\\
	\!\!&\leq\!\!-\sigma_{N}^{-1}\|\boldsymbol{L}_{\mathcal{G}}\boldsymbol{x}\|\!\!+\!\boldsymbol{x}^T\boldsymbol{L}_{\mathcal{G}}(\boldsymbol{e_{x}}\!\!+\!\!\boldsymbol{e_{\lambda}})\!\!-\!\!(\boldsymbol{\lambda}\!-\!\boldsymbol{\lambda}^*)^T\boldsymbol{L}_{\mathcal{G}}\boldsymbol{e_{x}}.\nonumber
\end{flalign} 
Here we have used the relations $\boldsymbol{e}_{\boldsymbol{x}^*}\!=\!0_{Nn}$ and $\boldsymbol{e}_{\boldsymbol{\lambda}^*}\!=\!0_{Nn}$. Further, noting $\boldsymbol{x}^T\boldsymbol{L}_{\mathcal{G}}(\boldsymbol{e_{x}}\!\!+\!\!\boldsymbol{e_{\lambda}})\leq\|\boldsymbol{L}_{\mathcal{G}}\boldsymbol{x}\|\|\boldsymbol{e}\|\leq\frac{\varepsilon_{1}}{2\sigma_{N}}\|\boldsymbol{L}_{\mathcal{G}}\boldsymbol{x}\|^2\!\!+\!\frac{\sigma_{N}}{2\varepsilon_{1}}\|\boldsymbol{e}\|^2$ for $0<\varepsilon_{1}<1$, we obtain
\begin{flalign}
	\dot{V}_{1}(\boldsymbol{z})
	\!\!&\leq\!\!-\!\frac{(1\!-\!\varepsilon_{1})}{\sigma_{N}}\|\!\boldsymbol{L}_{\mathcal{G}}\boldsymbol{x}\!\|^2\!\!+\!\frac{\sigma_{N}}{2\varepsilon_{1}}\|\boldsymbol{e}\|^2\!-\!(\boldsymbol{\lambda}\!\!-\!\boldsymbol{\lambda}^*)^{T}\boldsymbol{L}_{\mathcal{G}}\boldsymbol{e}_{\boldsymbol{x}}.\label{dotV11}
\end{flalign}

Moreover, there is
\begin{flalign}
	\dot{V}_{2}(\boldsymbol{z})\!\!
	%&=\!\!(\nabla V_{2}(\boldsymbol{z}))^{T}\dot{\boldsymbol{z}}\nonumber\\
	&=\!\!-\|\nabla f(\boldsymbol{x})\!\!+\!\boldsymbol{L}_{\mathcal{G}}\boldsymbol{x}\!\!+\!\boldsymbol{L}_{\mathcal{G}}\boldsymbol{\lambda}\|^2\!\!+\!(\nabla f(\boldsymbol{x})\!\!+\!\boldsymbol{L}_{\mathcal{G}}\boldsymbol{x}\!\!+\!\boldsymbol{L}_{\mathcal{G}}\boldsymbol{\lambda})^{T}\nonumber\\
	\!\!&~~~~\boldsymbol{L}_{\mathcal{G}}(\boldsymbol{e}_{\boldsymbol{x}}\!\!+\!\boldsymbol{e}_{\boldsymbol{\lambda}})\!\!+\!\|\boldsymbol{L}_{\mathcal{G}}\boldsymbol{x}\|^2-(\boldsymbol{L}_{\mathcal{G}}\boldsymbol{x})^T\boldsymbol{L}_{\mathcal{G}}\boldsymbol{e}_{\boldsymbol{x}}.\nonumber
\end{flalign}
And, using $(\nabla f(\boldsymbol{x})\!\!+\!\boldsymbol{L}_{\mathcal{G}}\boldsymbol{x}\!\!+\!\boldsymbol{L}_{\mathcal{G}}\boldsymbol{\lambda})^{T}\boldsymbol{L}_{\mathcal{G}}(\boldsymbol{e}_{\boldsymbol{x}}\!\!+\!\boldsymbol{e}_{\boldsymbol{\lambda}})\!\!\leq\!\|\nabla f(\boldsymbol{x})\!+\!\boldsymbol{L}_{\mathcal{G}}\boldsymbol{x}\!+\!\boldsymbol{L}_{\mathcal{G}}\boldsymbol{\lambda}\|\|\boldsymbol{L}_{\mathcal{G}}\|\|\boldsymbol{e}\|\!\!\leq\!\frac{\varepsilon_{2}}{2}\|\nabla f(\boldsymbol{x})\!+\!\boldsymbol{L}_{\mathcal{G}}\boldsymbol{x}\!+\!\boldsymbol{L}_{\mathcal{G}}\boldsymbol{\lambda}\|^2\!+\!\frac{1}{2\varepsilon_{2}}\|\boldsymbol{L}_{\mathcal{G}}\|^2\|\boldsymbol{e}\|^2$ for $0<\varepsilon_{2}<1$ gives rise to
\begin{flalign}
	\dot{V}_{2}(\boldsymbol{z})
	\!\!&\leq\!\!-(1\!\!-\!\varepsilon_{2})\|\nabla f(\boldsymbol{x})\!\!+\!\boldsymbol{L}_{\mathcal{G}}\boldsymbol{x}\!\!+\!\boldsymbol{L}_{\mathcal{G}}\boldsymbol{\lambda}\|^2\!\!+\!(2\varepsilon_{2})^{-1}\|\boldsymbol{L}_{\mathcal{G}}\|^2\|\boldsymbol{e}\|^2\nonumber\\
	\!\!&~~+\!\!\|\boldsymbol{L}_{\mathcal{G}}\boldsymbol{x}\|^2\!\!-\!(\boldsymbol{L}_{\mathcal{G}}\boldsymbol{x})^T\boldsymbol{L}_{\mathcal{G}}\boldsymbol{e}_{\boldsymbol{x}}.\label{V222}
\end{flalign}
Consequently, with \eqref{V}, we are able to derive
\begin{align}
	\dot{V}(\boldsymbol{z})\!\!
	%&=\!\!4\sigma_{N}\dot{V}_{1}(\boldsymbol{z})\!+\!\dot{V}_{2}(\boldsymbol{z})\nonumber\\
	\!\!&\leq\!\!-2\|\boldsymbol{L}_{\mathcal{G}}\boldsymbol{x}\|^2\!\!+\!4\sigma_{N}^2\|\boldsymbol{e}\|^2\!\!-\!4\sigma_{N}(\boldsymbol{\lambda}\!\!-\!\boldsymbol{\lambda}^*)^T\boldsymbol{L}_{\mathcal{G}}\boldsymbol{e}_{\boldsymbol{x}}\!\!-\!\frac{1}{2}\|\nabla f(\boldsymbol{x})\nonumber\\
	\!\!&~~+\!\boldsymbol{L}_{\mathcal{G}}\boldsymbol{x}\!\!+\!\boldsymbol{L}_{\mathcal{G}}\boldsymbol{\lambda}\|^2\!\!+\!\sigma_{N}^2\|\boldsymbol{e}\|^2\!\!+\!\frac{3}{2}\|\boldsymbol{L}_{\mathcal{G}}\boldsymbol{x}\|^2\!\!+\!\frac{1}{2}\|\boldsymbol{L}_{\mathcal{G}}\boldsymbol{e}_{\boldsymbol{x}}\|^2,\nonumber\\
	\!\!&\leq\!\!-\frac{1}{2}\|\boldsymbol{F}(\boldsymbol{z})\|^2\!\!+\!5\sigma_{N}^2\|\boldsymbol{e}\|^2\!\!+\!2\sigma_{N}\|\boldsymbol{\lambda}\!-\!\boldsymbol{\lambda}^*\|^{2}\!+\!(2\sigma_{N}\!+\!\frac{1}{2})\nonumber\\
	\!\!&~~~\|\boldsymbol{L}_{\mathcal{G}}\boldsymbol{e}_{\boldsymbol{x}}\|^{2},\label{VD}
\end{align}
for $\varepsilon_1=\varepsilon_2=1/2$. The first inequality follows from $\!\!-\!(\boldsymbol{L}_{\mathcal{G}}\boldsymbol{x})^T\boldsymbol{L}_{\mathcal{G}}\boldsymbol{e}_{\boldsymbol{x}}\!\leq\!\frac{1}{2}\|\boldsymbol{L}_{\mathcal{G}}\boldsymbol{x}\|^2\!+\!\frac{1}{2}\|\boldsymbol{e}_{\boldsymbol{x}}\|^2$. Since
$\|\boldsymbol{\lambda}-\boldsymbol{\lambda}^*\|^{2}\leq\frac{2V(\boldsymbol{z})}{3\sigma_{N}}$ and $\|\boldsymbol{L}_{\mathcal{G}}\boldsymbol{e}_{\boldsymbol{x}}\|^2\leq\sigma_{N}^2\|\boldsymbol{e}\|^2$,  Eq.~\eqref{VD} can be further upper bounded by
\begin{flalign}
	\dot{V}(\boldsymbol{z})\!\!\leq\!-\!\frac{1}{2}\|\!\boldsymbol{F}(\boldsymbol{z})\|^{2}\!\!+\!\!(2\sigma_{N}^3\!+\!\frac{11}{2}\sigma_{N}^2)\|\boldsymbol{e}(t)\|^{2}\!\!+\!\!\frac{4V(\boldsymbol{z})}{3}.\label{VVV}
\end{flalign}
This indicates the decaying property of $V(\boldsymbol{z})$ is related to $\|\boldsymbol{e}(t)\|$. Hence, the linear convergence of $V(\boldsymbol{z})$ is essentially equivalent to the following inequalities
\begin{equation}
	\left\{\begin{array}{l}
		V(\boldsymbol{z})\!\leq a(t),~0\leq t\leq kT,~k\in{\mathbb{N}},\\
		\|\boldsymbol{e}(kT)\|\!\leq\!c_{2}e^{-\sqrt{\frac{3+\sqrt{5}}{2}}\sigma_{N}T}b(kT), \\
		\|\boldsymbol{e}(t)\|\!< b(t),~~0\leq t<kT, \end{array}\right.\label{lem22}
\end{equation}
where
\begin{flalign}
	a(t)\!\!&=\!\!\frac{m_{f}\!+\!6\sigma_{N}}{2}M_{0}^2e^{-\eta t},\label{at}\\
	b(t)\!\!&=\!\!\frac{M_{0}}{\kappa\sigma_{N}}\sqrt{\frac{3\!-\!\kappa^2(3\eta+4)(m_{f}\!+\!6\sigma_{N})}{12\sigma_{N}\!+\!33}}e^{-\frac{\eta}{2}(\lfloor\frac{t}{T}\rfloor\!+\!1)T}.\label{bt}
\end{flalign}
Inequality $V(\boldsymbol{z})\leq a(t)$ in \eqref{lem22} means that $V(\boldsymbol{z})$ converges to zero linearly. The corresponding proof is deferred to the Appendix. 

Finally, via the convergence rate of $V(\boldsymbol{z})$ being $e^{-\eta t}$, we conclude that $\boldsymbol{z}(t)$ converges to $\boldsymbol{z}^*$ and $x_{i}(t) $ to $x^*$ for $i\in{\mathcal{V}}$ at a linear convergence rate. This rate can be explicitly given as $O(\gamma^{-t})$ with $\gamma=e^{\frac{\eta}{2}}>1$.
\end{proof}

Theorem 1 establishes the linear convergence of the QDPD algorithm even with the communication constraints of quantization. As a matter of fact, the cumulative quantization errors from quantized communication bring a difficulty to convergence analysis. For eliminating the effect of quantization errors, we introduce a special decaying quantization length strategy $l(k)$ in our QDPD algorithms. This well-designed $l(k)$ ensures the exponential convergence of quantization errors and contributes to the linear convergence of the QDPD algorithm.

\subsection{Bandwidth Analysis}\label{Ba}
In this subsection, we analyze the relationship between the bandwidth and the convergence rate. To characterize this relationship, we need to adjust the convergence rate dynamically. Hence, a positive gain parameter $\alpha$ is introduced into the QDPD algorithm and the dynamics \eqref{distributed algorithm} is redesigned as follows,
\begin{eqnarray}
	\left\{\begin{array}{l}
		\dot{x}_{i}(t)
		\!=\!\alpha \nabla f_{i}\left(x_{i}(t)\right)\!-\!\alpha\sum\limits_{j\in{\mathcal{N}_{i}}}\left(q^{x}_{i}(t)\!\!-\!q^{x}_{j}(t)\right)\!-\!\\
		~~~~~~~~~\alpha\sum\limits_{j\in{\mathcal{N}_{i}}}\left(q^{\lambda}_{i}(t)
		\!-\!q^{\lambda}_{j}(t)\right),\\
		\dot{\lambda}_{i}(t)
		\!=\!\alpha\sum\limits_{j\in{\mathcal{N}_{i}}}\left(q^{x}_{i}(t)\!-\!q^{x}_{j}(t)\right),~\alpha>0.
	\end{array}\right.\label{rcompactalgorithm}
\end{eqnarray}
When $\alpha\!=\!1$, \eqref{rcompactalgorithm} matches Algorithm 1. In fact, the introduced gain $\alpha$ does not affect the qualitative result of linear convergence for the QDPD algorithm with dynamic \eqref{rcompactalgorithm}, but has a quantitative impact on the convergence rate. We present the following corollary to illustrate this point.
\begin{corollary} 
Consider the QDPD algorithm with modified dynamics \eqref{rcompactalgorithm}, in which some parameters are revised as 

	i) $T_{\alpha}$ satisfies 
	\begin{eqnarray}
		(e^{\alpha\sqrt{\frac{3+\sqrt{5}}{2}}\sigma_{N}T_{\alpha}}\!-\!1)(e^{\frac{\alpha\eta}{2}T_{\alpha}}\!-\!1)\rho\alpha^{-1}<c_{1}<1\label{T2}.
	\end{eqnarray}

	ii) $l_{\alpha}(k)$ satisfies $$l_{\alpha}(k)=l_{\alpha}(0)e^{-\frac{\alpha\eta}{2}kT_{\alpha}},$$
	with $l_{\alpha}(0)\!\!=\!\frac{\sqrt{2}c_{2}M_{0}}{\kappa\sigma_{N}}\!\sqrt{\!\!\frac{3\!\!-\!\kappa^2(3\eta\!+\!4)(m_{f}\!+\!6\sigma_{N})}{Nn(12\sigma_{N}\!+\!33)}}e^{\!-\frac{\alpha\eta}{2}T_{\alpha}\!-\!\alpha\sqrt{\frac{3+\sqrt{5}}{2}}\!\sigma_{N}\!T_{\alpha}}$. 
	
	iii) $L_{\alpha}$ satisfies
	\begin{eqnarray}
		L_{\alpha}\geq\!\left\lceil\max\left\{\frac{2M_{0}}{l_{\alpha}(0)},\frac{\sqrt{2Nn}}{c_{2}}e^{\frac{\alpha\eta}{2}T_{\alpha}\!+\!\alpha\sqrt{\frac{3+\sqrt{5}}{2}}\sigma_{N}T_{\alpha}}\right\}\right\rceil.\label{L2}
	\end{eqnarray}

  Under Assumptions 1-3, the states of all agents converge to an optimal solution to the problem \eqref{primalproblem} at rate $O(\gamma_{\alpha}^{-t})$ with $\gamma_{\alpha}=e^{\frac{\alpha\eta}{2}}>1$. 
\end{corollary}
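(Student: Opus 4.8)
The plan is to reduce the corollary to Theorem~1 by a time-rescaling argument rather than redoing the entire Lyapunov analysis. Introducing the gain $\alpha>0$ into \eqref{rcompactalgorithm} is equivalent to running the original dynamics \eqref{compactalgorithm} in a rescaled time variable $\tau=\alpha t$. Concretely, I would define $\tilde{\boldsymbol{z}}(\tau)=\boldsymbol{z}(\tau/\alpha)$ and check that $\tilde{\boldsymbol{z}}$ satisfies exactly the unscaled QDPD dynamics, provided the sampling period in the $\tau$-clock is $\tilde T=\alpha T_\alpha$ and the quantization-length sequence is $\tilde l(k)=l_\alpha(k)$. Under this substitution, the parameter conditions \eqref{T2}, \eqref{L2} and the formula for $l_\alpha(0)$ collapse precisely onto the original conditions \eqref{T}, \eqref{L}, \eqref{lk} with $T$ replaced by $\tilde T=\alpha T_\alpha$: the exponents $\alpha\sqrt{(3+\sqrt5)/2}\,\sigma_N T_\alpha$ and $(\alpha\eta/2)T_\alpha$ become $\sqrt{(3+\sqrt5)/2}\,\sigma_N\tilde T$ and $(\eta/2)\tilde T$, and the extra factor $\alpha^{-1}$ multiplying $\rho$ in \eqref{T2} is exactly what is needed because $\dot V$ along \eqref{rcompactalgorithm} carries an overall factor $\alpha$, so the per-step contraction estimate is unchanged.

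Thus the concrete steps are: (i) verify the time-rescaling identity and confirm that the encoder/decoder bookkeeping (sampling, quantization range update $\mathcal P^z_j(k)$, recovery) is invariant under the rescaling, so that the admissibility of the quantizer — i.e., that every sampled value lands inside its declared range, which is the crux of the original proof in the Appendix — carries over verbatim; (ii) invoke Theorem~1 applied to $\tilde{\boldsymbol z}(\tau)$ with period $\tilde T$, concluding $\tilde{\boldsymbol z}(\tau)\to\boldsymbol z^*$ at rate $O\big(e^{-(\eta/2)\tau}\big)$; (iii) undo the substitution: since $\tau=\alpha t$, the rate in the original clock is $O\big(e^{-(\alpha\eta/2)t}\big)=O(\gamma_\alpha^{-t})$ with $\gamma_\alpha=e^{\alpha\eta/2}>1$, which is the claim. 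Because Assumptions~1--3 are statements about $f$, $\mathbf F$, and the optimal solution only — they do not involve time or $\alpha$ — they transfer without modification.

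The main obstacle is step (i): one must be careful that the quantization error bound driving the Lyapunov decay is the one expressed in terms of $l(k)$ (hence $l_\alpha(k)$), and that the "persistent excitation" property guaranteed by Assumption~3 still holds after rescaling. In particular I would re-examine inequality \eqref{VVV} with the factor $\alpha$ reinstated, confirm that the term $\tfrac{4V(\boldsymbol z)}{3}$ becomes $\tfrac{4\alpha V(\boldsymbol z)}{3}$ while the dissipation term becomes $-\tfrac{\alpha}{2}\|\boldsymbol F(\boldsymbol z)\|^2$, and then check that the induction hypotheses \eqref{lem22} with $a(t),b(t)$ rescaled in time close exactly when $T_\alpha$ obeys \eqref{T2}; the appearance of $\alpha^{-1}$ in \eqref{T2} is the single place where the scaling is not a pure relabeling and deserves an explicit line. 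Once that bookkeeping is checked, the rest is a direct appeal to Theorem~1, so I would keep the write-up short and emphasize only the rescaling identity and the matching of parameter constraints.
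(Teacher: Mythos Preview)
The paper omits the proof entirely, remarking only that it is ``similar to Theorem~1,'' i.e., one reruns the Lyapunov/induction argument of the Appendix with the factor $\alpha$ carried through.  Your time-rescaling shortcut is natural, and the substitution $\tau=\alpha t$, $\tilde T=\alpha T_\alpha$ does turn \eqref{rcompactalgorithm} into the original dynamics \eqref{compactalgorithm} with sampling period $\tilde T$; the quantizer data and the formulas for $l_\alpha(k)$ and $L_\alpha$ then match \eqref{lk} and \eqref{L} verbatim with $T\to\tilde T$.  Invoking Theorem~1 therefore yields linear convergence at rate $e^{-(\eta/2)\tau}=e^{-(\alpha\eta/2)t}$ under condition \eqref{T} with $T\to\alpha T_\alpha$, namely
\[
\bigl(e^{\alpha\sqrt{(3+\sqrt5)/2}\,\sigma_N T_\alpha}-1\bigr)\bigl(e^{\alpha\eta T_\alpha/2}-1\bigr)\rho\le c_1,
\]
\emph{without} the factor $\alpha^{-1}$.

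That is the gap: for $\alpha>1$ the Corollary's hypothesis \eqref{T2} is strictly weaker than the rescaled \eqref{T}, so pure time-rescaling does not establish the Corollary as stated.  Your justification that ``$\dot V$ carries an overall factor $\alpha$'' does not close this, because after passing to the $\tau$-clock there is no $\alpha$ left anywhere in the Lyapunov analysis.  The extra $\alpha^{-1}$ in \eqref{T2} appears only if one redoes Step~2 of the Appendix directly in the $t$-clock: there the error ODE becomes $\dot{\boldsymbol e}=\alpha\overline{\boldsymbol L}_{\mathcal G}\boldsymbol e+\alpha R$, and the two time integrals (of $e^{\alpha\bar\sigma_N(t-\tau)}$ and of $\sqrt{V}\sim e^{-\alpha\eta t/2}$) each contribute a $1/\alpha$, while only one is cancelled by the $\alpha$ in front of $R$.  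This is an artifact of the paper's product-of-integrals bound, which is not scale-invariant; time-rescaling bypasses that bound and therefore cannot reproduce \eqref{T2}.  To prove the Corollary under the stated hypothesis you must follow the paper's route and rerun Step~2 in the $t$-clock rather than rescale --- at which point the write-up is indeed ``similar to Theorem~1,'' but it is no longer a black-box reduction.
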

Since Corollary 1 is similar to Theorem 1, the proof is omitted here. In Corollary 1, the upper bound of the minimum bandwidth $\mathcal{B}_{\alpha}=\log_{2}(L_{\alpha})/T_{\alpha}$ and the convergence rate $\gamma_{\alpha}$ are dynamically adjusted by the gain $\alpha$. Next, we discuss their corresponding relationship in the following theorem.

\begin{theorem}
Under Assumptions 1-3, the following properties hold.

i) The QDPD algorithm with \eqref{rcompactalgorithm} linearly converges to an optimal solution of problem \eqref{primalproblem} under any positive bandwidth.

ii) The relationship between the communication bandwidth $\mathcal{B}$ and the convergence rate $\gamma_{\alpha}$ satisfies
\begin{eqnarray}
\mathcal{B}\leq C_{0}\ln\gamma_{\alpha}+C_{1},\label{Br}
\end{eqnarray}
where the constants $C_{0}$ and $C_{1}$ are chosen as 
\begin{flalign}
C_{0}&=(\log_{2}e)(1+\frac{\sqrt{6+\sqrt{5}}\sigma_{N}}{\eta})+\frac{\sqrt{24+8\sqrt{5}}\sigma_{N}+2\eta}{\eta\ln\rho_{0}}\nonumber\\
&~~(\log_{2}(2Nn)-2\log_{2}c_{2}),\nonumber\\
C_{1}&=(\frac{1}{2}\log_{2}(2Nn)-\log_{2}c_{2})\frac{2c_{2}}{\rho\rho_{0}\eta},~\rho_{0}>1.\nonumber
\end{flalign}
\end{theorem}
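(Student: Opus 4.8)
The plan is to push everything onto Corollary~1 and then estimate the minimum bandwidth $\mathcal{B}_\alpha=\log_2(L_\alpha)/T_\alpha$ as a function of the gain $\alpha$. Since Corollary~1 delivers the rate $\gamma_\alpha=e^{\alpha\eta/2}$, we have $\alpha=\tfrac{2}{\eta}\ln\gamma_\alpha$, so any estimate affine in $\alpha$ is automatically affine in $\ln\gamma_\alpha$. Abbreviate $a:=\sqrt{\tfrac{3+\sqrt5}{2}}\,\sigma_N$, $b:=\eta/2$ and $c:=a+b$; then the exponent appearing in $l_\alpha(0)$ and in the second entry of the maximum in \eqref{L2} is exactly $c\,\alpha T_\alpha$, and \eqref{T2} reads $(e^{a\alpha T_\alpha}-1)(e^{b\alpha T_\alpha}-1)\rho\alpha^{-1}<c_1$. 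Fixing $\rho_0>1$, I would select $T_\alpha$ through $\alpha T_\alpha=\min\bigl\{\tfrac{\ln\rho_0}{c},\ \tfrac12\sqrt{c_1\alpha/(ab\rho\rho_0)}\bigr\}$. This choice is admissible for every $\alpha>0$: the first entry forces $c\,\alpha T_\alpha\le\ln\rho_0$, hence $e^{c\alpha T_\alpha}\le\rho_0$, and then the elementary bound $e^x-1\le xe^x$ yields $(e^{a\alpha T_\alpha}-1)(e^{b\alpha T_\alpha}-1)\le ab(\alpha T_\alpha)^2e^{c\alpha T_\alpha}\le\rho_0\,ab(\alpha T_\alpha)^2\le c_1\alpha/(4\rho)<c_1\alpha/\rho$ by the second entry.

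For part~i), as $\alpha\to0^+$ the second entry of the minimum is the active one, so $\alpha T_\alpha\to0$, whence $e^{c\alpha T_\alpha}\to1$, the maximum in \eqref{L2} stays bounded and so does $L_\alpha$, while $T_\alpha=(\alpha T_\alpha)/\alpha\sim\tfrac12\sqrt{c_1/(ab\rho\rho_0)}\,\alpha^{-1/2}\to\infty$. Hence $\mathcal{B}_\alpha=\log_2(L_\alpha)/T_\alpha\to0$. Given therefore an arbitrary bandwidth budget $\mathcal{B}>0$, I choose $\alpha>0$ small enough that $\mathcal{B}_\alpha\le\mathcal{B}$; Corollary~1 then certifies that the QDPD algorithm with \eqref{rcompactalgorithm} converges linearly to an optimizer of \eqref{primalproblem} at rate $\gamma_\alpha=e^{\alpha\eta/2}>1$ within this budget, which is claim~i).

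For part~ii), I would bound $\mathcal{B}_\alpha$ with the same $T_\alpha$. Both entries of the maximum in \eqref{L2} equal a fixed constant (built from $N,n,c_2,\kappa,\sigma_N,m_f,\eta$) times $e^{c\alpha T_\alpha}$, so using $\lceil s\rceil\le 2s$ for $s\ge1$ one gets $\log_2 L_\alpha\le c(\log_2 e)\,\alpha T_\alpha+\bigl(1+\tfrac12\log_2(2Nn)-\log_2 c_2+R\bigr)$, where $R$ collects bounded terms. Dividing by $T_\alpha$: the first piece becomes $c(\log_2 e)\alpha=(1+\tfrac{\sqrt{6+2\sqrt5}\,\sigma_N}{\eta})(\log_2 e)\ln\gamma_\alpha$, the first summand of $C_0\ln\gamma_\alpha$ (the radical $\sqrt{6+2\sqrt5}=1+\sqrt5$ enters here through $2c/\eta$). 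For the second piece I use the lower bound on $T_\alpha$ implied by the construction: $1/T_\alpha=\alpha/(\alpha T_\alpha)\le c\alpha/\ln\rho_0+2\sqrt{ab\rho\rho_0\alpha/c_1}$. The term $c\alpha/\ln\rho_0$ is affine in $\ln\gamma_\alpha$ and produces the $1/\ln\rho_0$ summand of $C_0$ after multiplication by $\tfrac12\log_2(2Nn)-\log_2 c_2+R$; the term $\propto\sqrt{\alpha}=\sqrt{\tfrac{2}{\eta}\ln\gamma_\alpha}$ is controlled by $\sqrt x\le x+\tfrac14$, which sends a further affine-in-$\ln\gamma_\alpha$ piece into $C_0$ and a bounded piece (scaling like $c_2/(\rho\rho_0\eta)$) into $C_1$. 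Collecting the three contributions yields $\mathcal{B}\le\mathcal{B}_\alpha\le C_0\ln\gamma_\alpha+C_1$ of the asserted shape.

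The one genuinely delicate step is the lower bound on $T_\alpha$: condition~\eqref{T2} is transcendental and, read directly, only bounds $T_\alpha$ from above, whereas $\mathcal{B}_\alpha=\log_2(L_\alpha)/T_\alpha$ needs a lower bound. This is what dictates the two-regime, $\rho_0$-parametrised choice of $T_\alpha$ together with the convexity estimate $e^x-1\le xe^x$, followed by a careful accounting of how the ceiling, the $\max\{\cdot\}$ in \eqref{L2}, and the sub-leading $O(\sqrt{\ln\gamma_\alpha})$ term distribute between $C_0$ and $C_1$. Everything else — positivity and exponential decay of the Lyapunov function $V(\boldsymbol z)$, and optimality of the limit point — is inherited directly from Theorem~1 and Corollary~1.
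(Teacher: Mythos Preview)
Your overall strategy coincides with the paper's: both parts are pushed onto Corollary~1, an admissible $T_\alpha$ is selected, and then $\mathcal{B}_\alpha=\log_2(L_\alpha)/T_\alpha$ is estimated. The difference is tactical, and the paper's execution is cleaner. For part~ii) the paper does not use a two-branch minimum; it takes directly
\[
\frac{1}{T_\alpha}\;=\;\frac{\sqrt{24+8\sqrt5}\,\sigma_N+2\eta}{2\eta\ln\rho_0}\,\ln\gamma_\alpha\;+\;\frac{2c_1}{\rho\rho_0\eta},
\]
so $1/T_\alpha$ is \emph{already} affine in $\ln\gamma_\alpha$. The two summands verify the two constraints separately: the first alone gives $c\,\alpha T_\alpha\le\ln\rho_0$ (your notation), and the second alone, combined with $e^x-1\le e^x$ and $e^x-1\le xe^x$, gives $(e^{a\alpha T_\alpha}-1)(e^{b\alpha T_\alpha}-1)\rho\alpha^{-1}\le\tfrac{\eta}{2}T_\alpha\rho_0\rho\le c_1$. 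Substituting this $1/T_\alpha$ into $\mathcal{B}_\alpha$ yields the stated $C_0,C_1$ immediately, with no $\sqrt{\alpha}$ term and no recourse to $\sqrt x\le x+\tfrac14$. Your min-based $T_\alpha$ is admissible and the ensuing argument is valid, but the $\sqrt{\alpha}$ contribution it produces means your final constants will not agree with those displayed in the theorem; if the goal is the statement as written, you should switch to the direct affine choice of $1/T_\alpha$. For part~i) the paper is also simpler: it fixes $T_\alpha>0$ \emph{arbitrarily} and lets $\alpha\to0$ so that \eqref{T2} holds and $L_\alpha\to\lceil\max\{2M_0/l(0),\sqrt{2Nn}/c_2\}\rceil$ stays bounded; since $T_\alpha$ is free, $\mathcal{B}_\alpha=\log_2(L_\alpha)/T_\alpha$ can be made any positive number, without invoking a special $T_\alpha$-formula.
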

\begin{proof}
i) According to Corollary 1,  \eqref{rcompactalgorithm} ensures the linear convergence of the QDPD algorithm. Next, we discuss the parameters required in Corollary 1. 

Assume that $\eta$ is a fixed constant. In this case, for any $T_{\alpha}>0$, there exists a sufficiently small $\alpha$ such that \eqref{T2} holds. Then, the quantization level $L_{\alpha}$ is estimated via \eqref{L2} as follows,
\begin{eqnarray}
\lim_{\alpha\rightarrow0}(\sqrt{2Nn}/c_{2})e^{\frac{\alpha\eta}{2}T_{\alpha}\!+\!\alpha\sqrt{\frac{3+\sqrt{5}}{2}}\sigma_{N}T_{\alpha}}=\sqrt{2Nn}/c_{2},\nonumber
\end{eqnarray}
which means that the transmitted information could be represented by $\log_{2}(\lceil\max\{\frac{2M_{0}}{l(0)},\frac{\sqrt{2Nn}}{c_{2}}\}\rceil)$ bits data at each instant. Since $T_{\alpha}$ can be chosen as any positive constant, the upper bound of the minimum bandwidth $\mathcal{B}_{\alpha}=\log_{2}(\lceil\max\{\frac{2M_{0}}{l(0)},\frac{\sqrt{2Nn}}{c_{2}}\}\rceil)/T_{\alpha}$ 
can be any positive constant. It implies for any positive bandwidth, the QDPD algorithm with \eqref{rcompactalgorithm} achieves linear convergence.

ii) For a fixed convergence rate $\gamma_{\alpha}$, we compute the corresponding upper bound of the minimum bandwidth $\mathcal{B}_{\alpha}$. 

By choosing that
\begin{eqnarray}
T_{\alpha}=\frac{1}{\frac{\sqrt{24+8\sqrt{5}}\sigma_{N}+2\eta}{2\eta\ln\rho_{0}}\ln\gamma_{\alpha}\!+\!\frac{2c_{1}}{\rho\rho_{0}\eta}},\label{T3}~\rho_{0}>1,\label{Ts}
\end{eqnarray}
Following from $e^{x}-1\leq e^{x}$ and $e^{x}-1\leq xe^{x}$, one has that
\begin{eqnarray}
(e^{\alpha\sqrt{\frac{3+\sqrt{5}}{2}}\sigma_{N}T_{\alpha}}\!\!-\!1)(e^{\frac{\alpha\eta}{2}T_{\alpha}}\!\!-\!1)/\alpha
\!\!\leq\!\frac{\eta T_{\alpha}}{2} e^{\alpha\sqrt{\frac{3+\sqrt{5}}{2}}\sigma_{N}T_{\alpha}\!+\!\frac{\alpha\eta T_{\alpha}}{2}}.\nonumber
\end{eqnarray}
Invoking $T_{\alpha}\!\leq\!\frac{2\eta\ln\rho_{0}}{(\sqrt{24+8\sqrt{5}}\sigma_{N}\!+\!2\eta)\ln\gamma_{\alpha}}$ from \eqref{Ts} yields that
\begin{eqnarray}
e^{\alpha\sqrt{\frac{3+\sqrt{5}}{2}}\sigma_{N}T_{\alpha}\!+\!\frac{\alpha\eta}{2}T_{\alpha}}\!<\!\rho_{0},\label{Tb2}
\end{eqnarray}
Following $T_{\alpha}\!\leq\!\frac{\rho\rho_{0}\eta}{2c_{1}}$ from \eqref{Ts}, one can obtain that
$$(e^{\alpha\sqrt{\frac{3+\sqrt{5}}{2}}\sigma_{N}T_{\alpha}}\!-\!1)(e^{\frac{\alpha\eta}{2}T_{\alpha}}\!-\!1)\rho\alpha^{-1}\leq c_{1}<1.$$ 
Thus, $T_{\alpha}$ in \eqref{Ts} satisfies \eqref{T2}.

Noting the quantization levels at initial time has no effect on the value of the required communication bandwidth, we compute $\mathcal{B}_{\alpha}$ via \eqref{L2} as
\begin{flalign}
&\mathcal{B}_{\alpha}=\frac{\log_{2}(L_{\alpha})}{T_{\alpha}}= \frac{\log_{2}(\frac{\sqrt{2Nn}}{c_{2}}e^{\frac{\alpha\eta}{2}T_{\alpha}\!+\!\alpha\sqrt{\frac{3+\sqrt{5}}{2}}\sigma_{N}T_{\alpha}})}{T_{\alpha}},\nonumber\\
&=\! \frac{\frac{1}{2}\log_{2}(2Nn)\!-\!\log_{2}c_{2}}{T_{\alpha}}\!+\!\log_{2}e(\ln \gamma_{\alpha}\!+\!\alpha\sqrt{\frac{3\!+\!\sqrt{5}}{2}}).\label{B0}
%&=  c_{0}\ln \gamma_{\alpha}+c_{1}.\nonumber
\end{flalign}
Substituting \eqref{Ts} into \eqref{B0}, we conclude that $\mathcal{B}_{\alpha}=C_{0}\ln(\gamma_{\alpha})+C_{1}$. Hence, \eqref{Br} holds.
\end{proof}

Note that property i) provides a necessary and sufficient condition on the required bandwidth for linear convergence of the QDPD algorithm. Specifically, property i) provides its sufficiency. Its necessity can be deduced directly by Shannon's rate-distortion theory. That is, if the quantized distributed algorithm achieves linear convergence with the rate $\gamma_{\alpha}$, the bandwidth $\mathcal{B}_{\alpha}>\gamma_{\alpha}\log_{2}e>0$. 

Property ii) offers the upper bound of the minimum bandwidth. Hence, there may exist a $\mathcal{B}\!<\!\mathcal{B}_{\alpha}$ such that the QDPD algorithm achieves linear convergence with the fixed rate $\gamma_{\alpha}$. Conversely, for a fixed bandwidth $\mathcal{B}_{\alpha}$, the QDPD algorithm can always guarantee the linear convergence rate $\gamma_{\alpha}$. 

\section{Example}\label{example}
In this section, we use an exponential regression example to verify our result. Consider a training set $S\!=\!\{(a_{i},b_{i},c_{i},d_{i})^T\!\in\!\mathbb{R}^4\}$ for $i\!\in\!\{1,\cdots,12\}$, which is given in Table I.

We aim to learn a parameter $x$ to minimize problem \eqref{primalproblem}, whose local cost functions are designed as follows,
\begin{equation}
f_{i}(x)= \left\{\begin{array}{l}
	a_{i}(x-b_{i})^2,~ \text{if}~ x\geq b_{i},\\
	c_{i}(x+d_{i})^2,~ \text{if}~ x\leq -d_{i},\\
	0,~\text{otherwise},
\end{array}\right.
\end{equation}
which satisfies Assumption 1, and the corresponding $\mathbf{F}$ satisfies Assumption 2 based on Lemma 2 in \cite{liangshu}. Take 12 agents cooperatively to learn the optimal solution $x^*\!\in\![0,1]$. The network graph is described as a ring and the information transmission among agents is sampled and quantized. The parameters of the quantization scheme are chosen as: i) the sampling period $T=0.05 \text{s}$; ii) the dynamic length of quantization range $l(k)=0.8e^{-0.1k}$; iii) the number of quantization levels $L+1=68$. The initial states of all agents are set as $\boldsymbol{x}(0)=[-9;4;-9;-9;0;-8;6;6;4;-7;3;0]$.
\begin{table}
	\renewcommand\arraystretch{1.2}
	\caption{}
	\label{dsmb}
	\centering
	\footnotesize
	\begin{tabular}{p{0.95\linewidth}}
		\Xhline{1.0pt}
		{\textbf{i}~~~~~~~\textbf{a}~~~~~~~\textbf{b}~~~~~~~\textbf{c}~~~~~~~\textbf{d}~~~~~~~~~~\textbf{i}~~~~~~~\textbf{a}~~~~~~~\textbf{b}~~~~~~~\textbf{c}~~~~~~~\textbf{d}} \\ \hline 
		{\textbf{1}~~~~~~~{0.1}~~~~~{0.5}~~~~~{1.0}~~~~~{9.0}~~~~~~~\textbf{7}~~~~~~~{0.5}~~~~~{0.4}~~~~~{1.0}~~~~~{5.0}} \\ \hline 
		{\textbf{2}~~~~~~~{0.3}~~~~~{0.2}~~~~~{3.0}~~~~~{3.0}~~~~~~~\textbf{8}~~~~~~~{0.6}~~~~~{1.0}~~~~~{7.0}~~~~~{5.0}} \\ \hline
		{\textbf{3}~~~~~~~{0.8}~~~~~{0.5}~~~~~{3.0}~~~~~{7.0}~~~~~~~\textbf{9}~~~~~~~{0.2}~~~~~{0.0}~~~~~{5.0}~~~~~{9.0}} \\ \hline  
		{\textbf{4}~~~~~~~{0.0}~~~~~{0.6}~~~~~{7.0}~~~~~{2.0}~~~~~~~\textbf{10}~~~~~~{0.5}~~~~~{0.9}~~~~~{8.0}~~~~~{6.0}} \\ \hline 
		{\textbf{5}~~~~~~~{0.9}~~~~~{0.7}~~~~~{1.0}~~~~~{0.0}~~~~~~~\textbf{11}~~~~~~{1.0}~~~~~{0.9}~~~~~{7.0}~~~~~{6.0}} \\ \hline 
		{\textbf{6}~~~~~~~{0.7}~~~~~{0.4}~~~~~{7.0}~~~~~{7.0}~~~~~~~\textbf{12}~~~~~~{0.5}~~~~~{0.8}~~~~~{9.0}~~~~~{9.0}} \\ \hline 
	\end{tabular}
\end{table}

i) The trajectories of $\boldsymbol{x}(t)$ and $\boldsymbol{\lambda}(t)$ are shown in Fig.2, which demonstrates the convergence of the QDPD algorithm. 

\begin{figure}
	\centering
	\subfigure{\includegraphics[height=3cm,width=7cm]{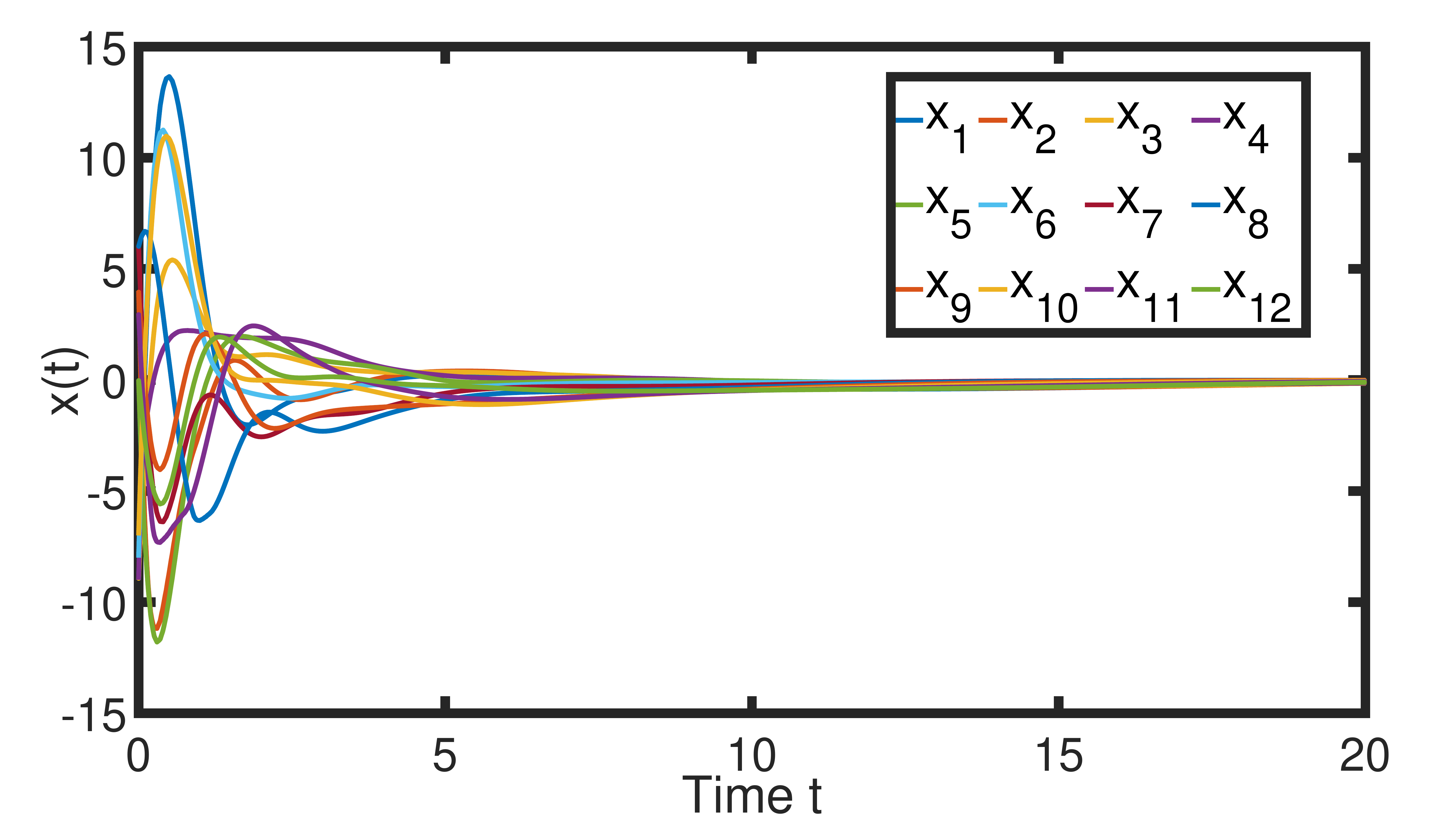}}
	\subfigure{\includegraphics[height=3cm,width=7cm]{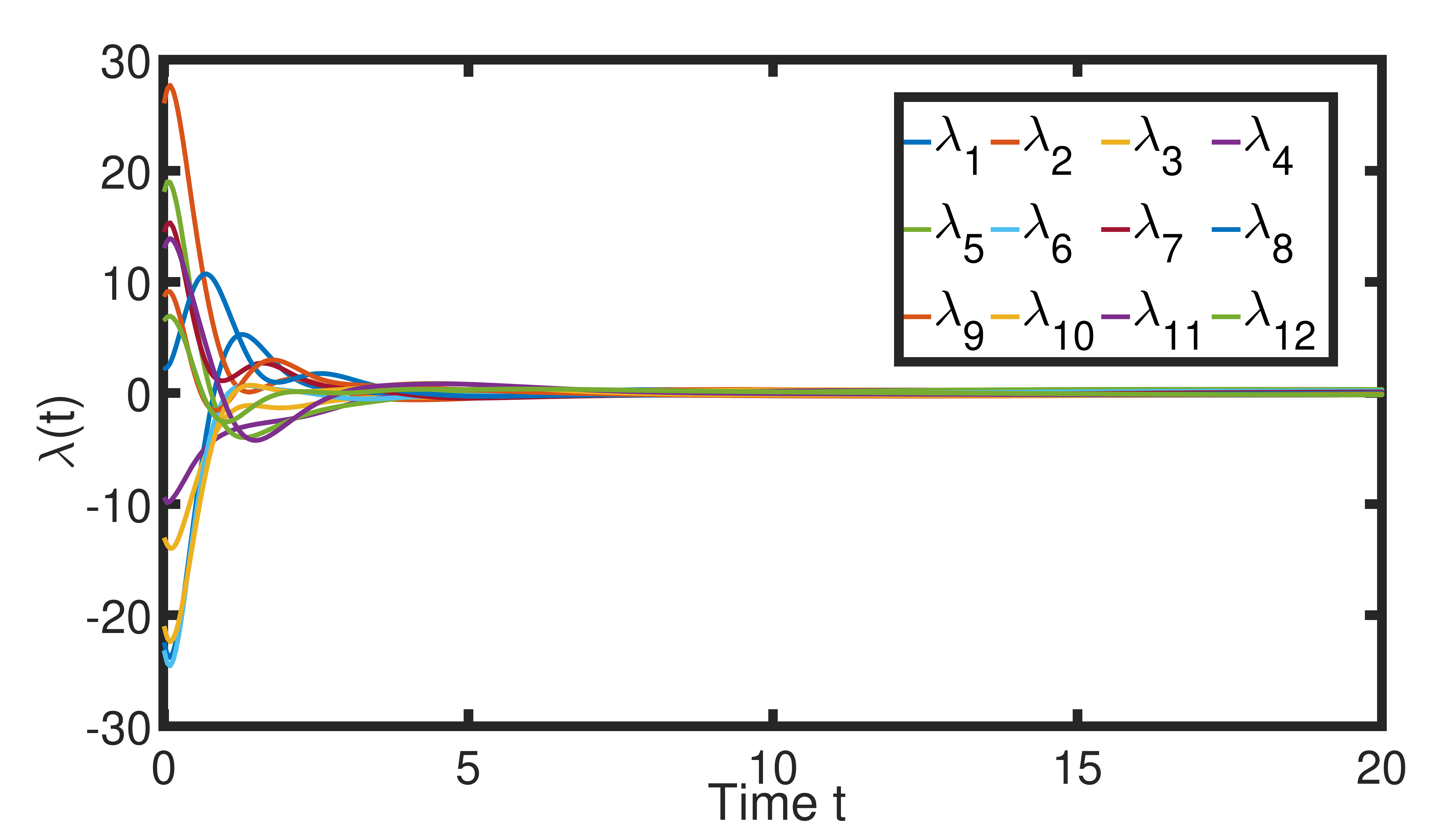}}\label{lambda}
	\vspace{-1.0em}
	\caption{The trajectories of $x_{i}(t)$ and $\lambda_{i}(t)$ for $i\in\{1,\cdots,12\}$.}
\end{figure}

ii) The tracking errors $\|\boldsymbol{x}(t)-\boldsymbol{x}^*\|$ of the QDPD algorithm (blue line) and the related PD algorithm in \cite{liangshu} (red line) are shown in Fig.3. The result illustrates that the quantized communication has a slight impact on the convergence rate. Further, a performance function $J(t)=e^{0.01t}\|\boldsymbol{x}(t)-\boldsymbol{x}^*\|$ is used to show the linear convergence.
\vspace{-2em}
\begin{figure}
	\centering
	\subfigure{\includegraphics[height=3cm,width=7cm]{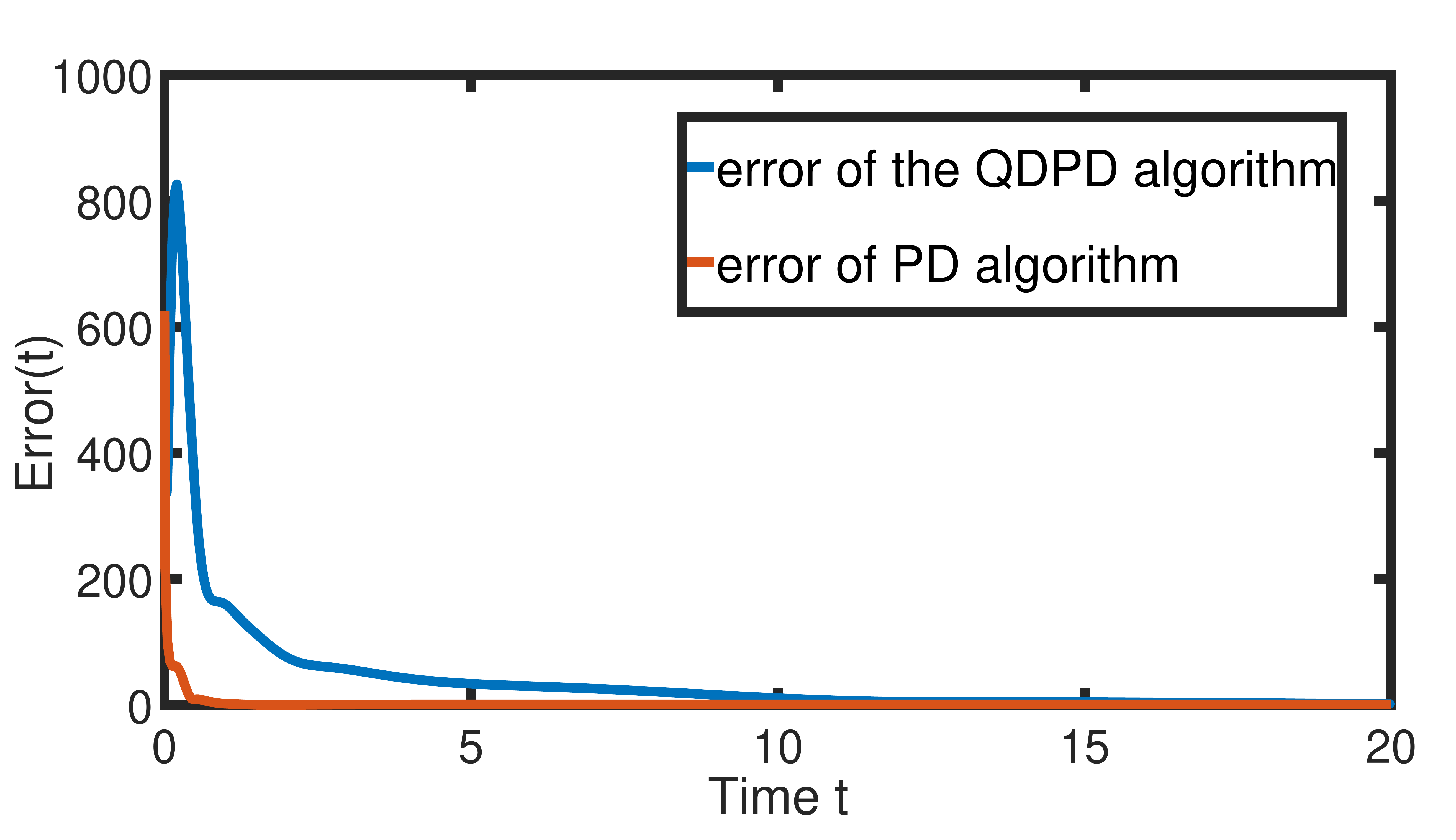}}\label{error}
	\subfigure{\includegraphics[height=3cm,width=7cm]{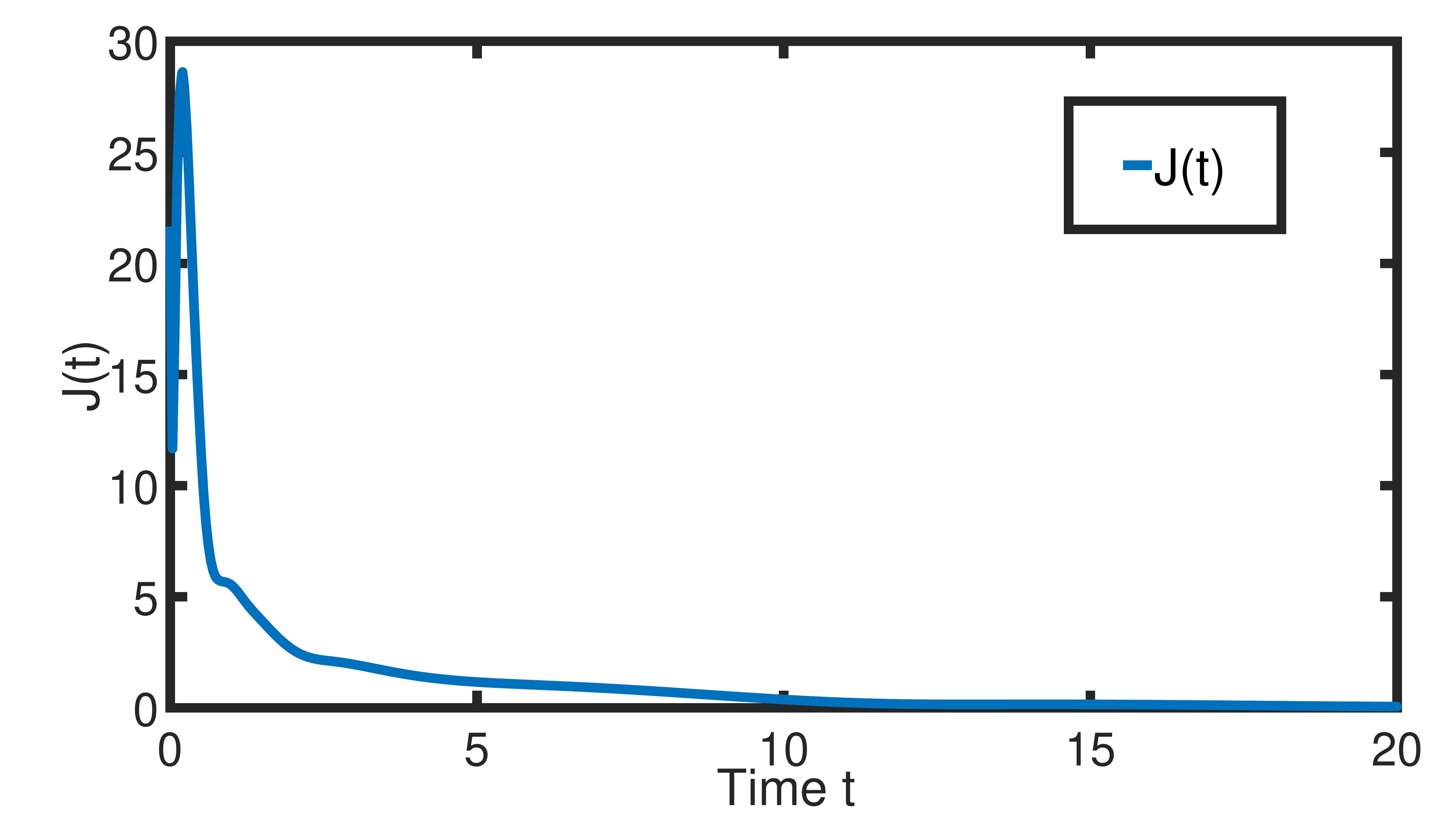}}\label{J}
	\vspace{-1.0em}
	\caption{The trajectories of the tracking errors of the QDPD algorithm and the distributed PD algorithm without quantization, and the trajectory of $J(t)$.}
\end{figure}

\section{conclusion}\label{conclusion}
This paper introduced a continuous-time quantized distributed optimization algorithm, called the QDPD algorithm, to solve a distributed optimization problem with finite bandwidth constraints. Without strong convexity, our QDPD algorithm can converge to an optimal solution at a linear convergence rate under a mild metric subregularity assumption. Meanwhile, the required bandwidth was analyzed in detail. Particularly, for any positive bandwidth, our QDPD algorithm with a minor modification always maintained linear convergence. Under a specific bandwidth, we proved that a lower bound that guarantees this linear convergence rate could be characterized.

\section{Appendix}
To complete the proof of Theorem 1, we first introduce two properties concerned on the upper bound of the stacked optimal dual variables $\|\boldsymbol{\lambda}^*\|$ and the bound of $V(\boldsymbol{z})$.
\begin{prop}
Under Assumption 3, the stacked optimal dual variables $\|\boldsymbol{\lambda}^*\|\!\!\leq\! \frac{1_{Nn}^{T}\boldsymbol{\lambda}(0)}{\sqrt{Nn}}\!+\!\sqrt{Nn}(\frac{M_{1}}{\sigma_{2}}\!+\!M_{2})$.
\end{prop}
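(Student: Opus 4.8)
The plan is to estimate $\|\boldsymbol{\lambda}^*\|$ by orthogonally splitting it with respect to $\ker\boldsymbol{L}_{\mathcal{G}}$ and $\mathrm{range}\,\boldsymbol{L}_{\mathcal{G}}$ and bounding each piece by a different mechanism: a spectral bound for the range part and a conservation law of the dynamics for the kernel part. Write $\boldsymbol{\lambda}^*=\boldsymbol{\lambda}^*_{\parallel}+\boldsymbol{\lambda}^*_{\perp}$ with $\boldsymbol{\lambda}^*_{\parallel}\in\ker\boldsymbol{L}_{\mathcal{G}}$ and $\boldsymbol{\lambda}^*_{\perp}\in\mathrm{range}\,\boldsymbol{L}_{\mathcal{G}}=(\ker\boldsymbol{L}_{\mathcal{G}})^{\perp}$, so that $\|\boldsymbol{\lambda}^*\|\le\|\boldsymbol{\lambda}^*_{\parallel}\|+\|\boldsymbol{\lambda}^*_{\perp}\|$. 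First I would simplify the KKT system \eqref{kkt1}: its second line $\boldsymbol{L}_{\mathcal{G}}\boldsymbol{x}^*=0$ together with connectedness of $\mathcal{G}$ forces $\boldsymbol{x}^*=1_N\otimes x^*$ with $x^*$ a minimizer of \eqref{primalproblem}, and then the first line collapses to $\boldsymbol{L}_{\mathcal{G}}\boldsymbol{\lambda}^*=-\nabla f(\boldsymbol{x}^*)$.

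For the range component, recall that $\boldsymbol{L}_{\mathcal{G}}=L_{\mathcal{G}}\otimes I_n$ has eigenvalues $\sigma_1,\dots,\sigma_N$, each of multiplicity $n$, so on $\mathrm{range}\,\boldsymbol{L}_{\mathcal{G}}$ it is invertible with smallest singular value $\sigma_2>0$. Since $\boldsymbol{L}_{\mathcal{G}}\boldsymbol{\lambda}^*_{\parallel}=0$, we get $\boldsymbol{L}_{\mathcal{G}}\boldsymbol{\lambda}^*_{\perp}=\boldsymbol{L}_{\mathcal{G}}\boldsymbol{\lambda}^*=-\nabla f(\boldsymbol{x}^*)$, hence $\|\boldsymbol{\lambda}^*_{\perp}\|\le\sigma_2^{-1}\|\nabla f(\boldsymbol{x}^*)\|$. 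It then remains to bound $\|\nabla f(\boldsymbol{x}^*)\|$ coordinatewise: writing $\nabla f(\boldsymbol{x}^*)=[\nabla f_1(x^*);\cdots;\nabla f_N(x^*)]$, Assumption 3 gives $\|\nabla f(\boldsymbol{x}^*)\|^2=\sum_{i\in\mathcal{V}}\|\nabla f_i(x^*)\|^2\le n\sum_{i\in\mathcal{V}}\|\nabla f_i(x^*)\|_{\infty}^2\le NnM_2^2$, so $\|\boldsymbol{\lambda}^*_{\perp}\|\le\sqrt{Nn}\,M_2/\sigma_2$. Combined with $\|\boldsymbol{x}^*\|=\sqrt{N}\|x^*\|\le\sqrt{Nn}\,M_1$ (which feeds the constant $M'$ in \eqref{lk}), this delivers the $M_1,M_2$-dependent terms of the asserted bound up to the precise constants.

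For the kernel component the crucial point is that $\boldsymbol{\lambda}^*$ is not an arbitrary multiplier — these are unique only modulo $\ker\boldsymbol{L}_{\mathcal{G}}$ — but the one to which the QDPD trajectory converges, and this is pinned down by a conservation law. From the stacked dynamics \eqref{compactalgorithm}, $\dot{\boldsymbol{\lambda}}(t)=\boldsymbol{L}_{\mathcal{G}}\boldsymbol{q}^{\boldsymbol{x}}(t)$ lies in $\mathrm{range}\,\boldsymbol{L}_{\mathcal{G}}$ for every $t$, so the orthogonal projection of $\boldsymbol{\lambda}(t)$ onto $\ker\boldsymbol{L}_{\mathcal{G}}$ is constant in $t$; equivalently $(1_N^{T}\otimes I_n)\boldsymbol{\lambda}(t)=(1_N^{T}\otimes I_n)\boldsymbol{\lambda}(0)$ since $(1_N^{T}\otimes I_n)\boldsymbol{L}_{\mathcal{G}}=0$. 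Letting $t\to\infty$ yields $\boldsymbol{\lambda}^*_{\parallel}=\frac{1}{N}(1_N1_N^{T}\otimes I_n)\boldsymbol{\lambda}(0)$, hence $\|\boldsymbol{\lambda}^*_{\parallel}\|=N^{-1/2}\|(1_N^{T}\otimes I_n)\boldsymbol{\lambda}(0)\|$, which is the first term $1_{Nn}^{T}\boldsymbol{\lambda}(0)/\sqrt{Nn}$ of the statement (and vanishes under the Algorithm~1 initialization $\boldsymbol{\lambda}(0)=\boldsymbol{L}_{\mathcal{G}}\boldsymbol{q}^{\boldsymbol{x}}(0)$, since then $(1_N^{T}\otimes I_n)\boldsymbol{\lambda}(0)=0$). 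Adding the two bounds completes the argument.

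The hard part will be the kernel component: because the Lagrange multiplier for \eqref{dualproblem} is determined only up to $\ker\boldsymbol{L}_{\mathcal{G}}$, the bound cannot come from the KKT conditions alone, and one must exploit the structure of the algorithm — that $(1_N^{T}\otimes I_n)\boldsymbol{\lambda}(t)$ is a first integral of \eqref{compactalgorithm} — to identify which dual optimal point is being tracked and to control its component in $\ker\boldsymbol{L}_{\mathcal{G}}$. Once that is established, the Pythagorean split, the spectral lower bound by $\sigma_2$, and the coordinatewise estimate of $\|\nabla f(\boldsymbol{x}^*)\|$ via Assumption 3 are all routine.
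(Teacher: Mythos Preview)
Your proposal is correct and follows essentially the same route as the paper: orthogonally split $\boldsymbol{\lambda}^*$ along $\ker\boldsymbol{L}_{\mathcal{G}}$ and its complement, control the range part via the KKT relation and the spectral gap $\sigma_2$, and pin down the kernel part through the conservation law $1^{T}\dot{\boldsymbol{\lambda}}=0$ of the dynamics \eqref{compactalgorithm}. The paper carries this out via an explicit eigendecomposition $L_{\mathcal{G}}=P^{T}AP$ and the Moore--Penrose inverse $L_{\mathcal{G}}^{\dagger}$, but the content is identical. One minor difference: you simplify the first KKT line using $\boldsymbol{L}_{\mathcal{G}}\boldsymbol{x}^*=0$ to get $\boldsymbol{L}_{\mathcal{G}}\boldsymbol{\lambda}^*=-\nabla f(\boldsymbol{x}^*)$ directly, whereas the paper keeps the (vanishing) $\boldsymbol{x}^*$ contribution and bounds $\|\widehat{\boldsymbol{\lambda}}_+^*\|\le \sigma_2^{-1}\|\nabla f(\boldsymbol{x}^*)\|+\|\boldsymbol{x}^*\|$; this is why your range estimate comes out as $\sqrt{Nn}\,M_2/\sigma_2$ rather than the looser $\sqrt{Nn}(M_2/\sigma_2+M_1)$, and it accounts for the ``up to the precise constants'' discrepancy you flagged.
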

\begin{proof}
For simplicity, we discuss the case of $n=1$ since the proof of the case $n>1$ is similar. From \eqref{compactalgorithm}, $1_{N}^{T}\dot{\boldsymbol{\lambda}}=0,$ which leads to $1_{N}^{T}\boldsymbol{\lambda}(t)=1_{N}^{T}\boldsymbol{\lambda}(0)$ for any $t\geq 0$. Using the symmetry of $L_{\mathcal{G}}$, we decompose the Laplacian matrix
$L_{\mathcal{G}}\!\!=\!\!P^{T}\!A\!P$ and its generalized inverse matrix $L_{\mathcal{G}}^{\dagger}\!\!=\!\!P^{T}\!A^{\dagger}\!P$ respectively associated with an orthogonal matrix $P$ and
\begin{eqnarray}
A\!\!=\!\!\text{diag}(0,\sigma_{2},\cdots,\sigma_{N}),~~~A^{\dagger}\!\!=\!\!\text{diag}(0,\frac{1}{\sigma_{2}},\cdots, \frac{1}{\sigma_{N}}).\nonumber
\end{eqnarray}
Multiplying both side of \eqref{kkt1} by $L_{\mathcal{G}}^{\dagger}$ can lead to that
\begin{flalign}
	\left\{\begin{array}{l}-P^{T}[A^{\dagger}P\nabla f(\boldsymbol{x}^*)+\!AA^{\dagger}P(\boldsymbol{x}^*+\boldsymbol{\lambda}^*)]\!=\!0_{N}, \\ 
	P^{T}A^{\dagger}AP\boldsymbol{x}^*\!=\!0_{N}.\end{array}\right.\label{kkt3}
\end{flalign}
Define $\widehat{\boldsymbol{x}}^*=P\boldsymbol{x}^*$, $\widehat{\boldsymbol{\lambda}}^*=P\boldsymbol{\lambda}^*$ and $\nabla \widehat{f}(\boldsymbol{x}^*)=P\nabla f(\boldsymbol{x}^*)$. Then substituting them into \eqref{kkt3} obtains that  
\begin{equation}
\widehat{\lambda}^*_{i}=\widehat{x}^*_{i}-\frac{1}{\sigma_{i}}\nabla \widehat{f}_{i}(\boldsymbol{x}^*),~i=2,\cdots,N.\label{qqqq}
\end{equation} 
Further define $\widehat{\boldsymbol{\lambda}}^{*}_{+}\!=\![\widehat{\lambda}^{*}_{2};\!\cdots\!;\widehat{\lambda}^{*}_{N}]\!\in\!{\mathbb{R}^{N-1}}$. Invoking \eqref{qqqq} yields that 
\begin{flalign}
\|\widehat{\boldsymbol{\lambda}}^{*}_{+}\|\!=\!\sqrt{\sum_{i=2}^{N}(\widehat{x}^*_{i}\!-\!\frac{1}{\sigma_{i}}\nabla \widehat{f}_{i}(\boldsymbol{x}^*))^{2}}\leq\frac{1}{\sigma_{2}}\|\nabla {f}(\boldsymbol{x}^*)\|\!+\!\|{\boldsymbol{x}}^*\|.\nonumber
\end{flalign}
Regrading $L_{{\mathcal{G}}}P^{T}\!\!=\!PA$ and $L_{{\mathcal{G}}}P_{1}^{T}\!\!=\!0_{N}$ associated with $P_{1}\!\!=\!\frac{1_{N}^{T}}{\sqrt{N}}$, by noting that $|\widehat{\lambda}^{*}_{1}|\!\!=\!\frac{1_{N}^{T}\boldsymbol{\lambda}^*}{\sqrt{N}}$, following from Assumption 3, one can obtain that
\begin{flalign}
\|\boldsymbol{\lambda}^*\|=\|\widehat{\boldsymbol{\lambda}}^*\|\!\leq\!|\widehat{\lambda}^{*}_{1}|\!+\!\|\widehat{\boldsymbol{\lambda}}^{*}_{+}\|\leq \frac{1_{N}^{T}\boldsymbol{\lambda}(0)}{\sqrt{N}}\!+\!\sqrt{N}(\frac{1}{\sigma_{2}}M_{1}\!+\!M_{2}).\nonumber
\end{flalign}
When $n\neq 1$, $\|\boldsymbol{\lambda}^*\|\leq \frac{1}{\sqrt{Nn}}1_{N}^{T}\boldsymbol{\lambda}(0)\!\!+\!\sqrt{Nn}(\frac{1}{\sigma_{2}}M_{1}\!\!+\!M_{2})$.
\end{proof}

\begin{prop}
Under Assumption 1, the bound of $V(\boldsymbol{z})$ satisfies
\begin{flalign}
\frac{3\sigma_{N}}{2}\|\boldsymbol{z}-\boldsymbol{z}^*\|^2\!\leq\! V(\boldsymbol{z})\leq\frac{m_{f}\!+\!6\sigma_{N}}{2}\|\boldsymbol{z}-\boldsymbol{z}^*\|^2.\label{boundz}
\end{flalign}
\end{prop}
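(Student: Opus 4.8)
The plan is to establish the two-sided bound on $V(\boldsymbol{z}) = 4\sigma_N V_1(\boldsymbol{z}) + V_2(\boldsymbol{z})$ by controlling $V_2(\boldsymbol{z})$ alone, since $4\sigma_N V_1(\boldsymbol{z}) = 2\sigma_N\|\boldsymbol{z}-\boldsymbol{z}^*\|^2$ is already exactly a multiple of $\|\boldsymbol{z}-\boldsymbol{z}^*\|^2$. I would start from the rewritten form of $V_2$ derived in the proof of Theorem~1, namely
\begin{flalign}
V_2(\boldsymbol{z}) = \underbrace{f(\boldsymbol{x})-f(\boldsymbol{x}^*)-(\boldsymbol{x}-\boldsymbol{x}^*)^T\nabla f(\boldsymbol{x}^*)}_{=:D_f(\boldsymbol{x})} + \tfrac12(\boldsymbol{x}-\boldsymbol{x}^*)^T\boldsymbol{L}_{\mathcal{G}}(\boldsymbol{x}-\boldsymbol{x}^*) + (\boldsymbol{x}-\boldsymbol{x}^*)^T\boldsymbol{L}_{\mathcal{G}}(\boldsymbol{\lambda}-\boldsymbol{\lambda}^*),\nonumber
\end{flalign}
which is valid under Assumption~1 (it uses only the KKT relations and $(\boldsymbol{x}^*)^T\boldsymbol{L}_{\mathcal{G}}=0$). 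The Bregman term $D_f(\boldsymbol{x})$ is nonnegative by convexity and bounded above by $\tfrac{m_f}{2}\|\boldsymbol{x}-\boldsymbol{x}^*\|^2$ by $m_f$-smoothness; the quadratic Laplacian term lies in $[0,\tfrac{\sigma_N}{2}\|\boldsymbol{x}-\boldsymbol{x}^*\|^2]$ since $0\preceq\boldsymbol{L}_{\mathcal{G}}\preceq\sigma_N I$; and the cross term is bounded in absolute value by $\|\boldsymbol{L}_{\mathcal{G}}\|\,\|\boldsymbol{x}-\boldsymbol{x}^*\|\,\|\boldsymbol{\lambda}-\boldsymbol{\lambda}^*\| \le \tfrac{\sigma_N}{2}(\|\boldsymbol{x}-\boldsymbol{x}^*\|^2+\|\boldsymbol{\lambda}-\boldsymbol{\lambda}^*\|^2) = \tfrac{\sigma_N}{2}\|\boldsymbol{z}-\boldsymbol{z}^*\|^2$.

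For the lower bound I would combine the already-recorded estimate $V_2(\boldsymbol{z}) \ge -\tfrac{\sigma_N}{2}\|\boldsymbol{z}-\boldsymbol{z}^*\|^2$ from Eq.~\eqref{v2u} with the exact value $4\sigma_N V_1(\boldsymbol{z}) = 2\sigma_N\|\boldsymbol{z}-\boldsymbol{z}^*\|^2$, yielding $V(\boldsymbol{z}) \ge 2\sigma_N\|\boldsymbol{z}-\boldsymbol{z}^*\|^2 - \tfrac{\sigma_N}{2}\|\boldsymbol{z}-\boldsymbol{z}^*\|^2 = \tfrac{3\sigma_N}{2}\|\boldsymbol{z}-\boldsymbol{z}^*\|^2$, which is exactly the left inequality in \eqref{boundz} (this is the same fact asserted in the Theorem~1 proof). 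For the upper bound, I collect the three upper estimates above to get $V_2(\boldsymbol{z}) \le \tfrac{m_f}{2}\|\boldsymbol{x}-\boldsymbol{x}^*\|^2 + \tfrac{\sigma_N}{2}\|\boldsymbol{x}-\boldsymbol{x}^*\|^2 + \tfrac{\sigma_N}{2}\|\boldsymbol{z}-\boldsymbol{z}^*\|^2 \le \tfrac{m_f+2\sigma_N}{2}\|\boldsymbol{z}-\boldsymbol{z}^*\|^2$, and then add $2\sigma_N\|\boldsymbol{z}-\boldsymbol{z}^*\|^2$ to obtain $V(\boldsymbol{z}) \le \tfrac{m_f+6\sigma_N}{2}\|\boldsymbol{z}-\boldsymbol{z}^*\|^2$, the right inequality.

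The only genuinely delicate point is bookkeeping the constants so that the Laplacian-related contributions add up to exactly $2\sigma_N$ (one from the $\tfrac12$-weighted quadratic term bounded by $\tfrac{\sigma_N}{2}$, one from the cross term bounded by $\tfrac{\sigma_N}{2}$ in each of the $\boldsymbol{x}$ and $\boldsymbol{\lambda}$ pieces, and the $4\sigma_N V_1$ term contributing $2\sigma_N$) — i.e., $\tfrac{\sigma_N}{2}+\tfrac{\sigma_N}{2}+2\sigma_N = 3\sigma_N$, and together with $\tfrac{m_f}{2}$ this gives the stated $\tfrac{m_f+6\sigma_N}{2}$. No real obstacle arises; the result is a routine consequence of convexity, $m_f$-smoothness, and $\|\boldsymbol{L}_{\mathcal{G}}\|=\sigma_N$, and essentially restates bounds already used implicitly in the proof of Theorem~1.
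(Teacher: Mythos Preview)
Your proof is correct and follows essentially the same approach as the paper: both reduce to bounding $V_2$ via the KKT-rewritten form, using convexity and $m_f$-smoothness for the Bregman term, $0\preceq\boldsymbol{L}_{\mathcal{G}}\preceq\sigma_N I$ for the quadratic term, and Young's inequality for the cross term, arriving at $V_2(\boldsymbol{z})\le\tfrac{m_f+2\sigma_N}{2}\|\boldsymbol{z}-\boldsymbol{z}^*\|^2$. The only cosmetic difference is that the paper applies Young with a parameter $\varepsilon=\tfrac{\sigma_N}{m_f+\sigma_N}$ before relaxing to the same bound, whereas you take $\varepsilon=1$ directly; your route is slightly cleaner and yields the identical constant.
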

\begin{proof}
By \eqref{v2u}, one has that $V(\boldsymbol{z})\geq\frac{3\sigma_{N}}{2}\|\boldsymbol{z}-\boldsymbol{z}^*\|^2$. Next, we establish the upper bound of $V(\boldsymbol{z})$. It follows from the $m_{f}$-Lipschitz continuity of $\nabla f(\boldsymbol{x})$ that $f(\boldsymbol{x})-f(\boldsymbol{x}^*)-(\boldsymbol{x}-\boldsymbol{x}^*)^{T}\nabla f(\boldsymbol{x}^*)\leq \frac{m_{f}}{2}\|\boldsymbol{x}-\boldsymbol{x}^*\|^2$.  Since $\boldsymbol{L}_{\mathcal{G}}$ is positive semi-defined, $\frac{1}{2}(\boldsymbol{x}-\boldsymbol{x}^*)^{T}\boldsymbol{L}_{\mathcal{G}}(\boldsymbol{x}-\boldsymbol{x}^*)\leq\frac{\sigma_{N}}{2}\|\boldsymbol{x}-\boldsymbol{x}^*\|^2$ and $(\boldsymbol{x}-\boldsymbol{x}^*)^T\boldsymbol{L}_{\mathcal{G}}(\boldsymbol{\lambda}-\boldsymbol{\lambda}^*)\leq \frac{\sigma_{N}}{2}(\varepsilon\|\boldsymbol{x}-\boldsymbol{x}^*\|^2+\frac{1}{\varepsilon}\|\boldsymbol{\lambda}-\boldsymbol{\lambda}^*\|^2)$ for any $\varepsilon>0$. Choosing $\varepsilon\!=\!\frac{\sigma_{N}}{m_{f}+\sigma_{N}}$ results in that
	\begin{flalign}
		V_{2}(\boldsymbol{z})\!\!&\leq\!\! (m_{f}\!\!+\!\sigma_{N})/2\|\boldsymbol{z}\!-\!\boldsymbol{z}^*\|^2\!+\!\sigma_{N}^2/(2(m_{f}\!\!+\!\sigma_{N}))\|\boldsymbol{x}\!-\!\boldsymbol{x}^*\|^2,\nonumber\\
		\!\!&\leq\!\!(m_{f}\!\!+\!2\sigma_{N})/2\|\boldsymbol{z}\!-\!\boldsymbol{z}^*\|^2.
	\end{flalign}
which implies $V(\boldsymbol{z})\leq\frac{m_{f}+6\sigma_{N}}{2}\|\boldsymbol{z}\!-\!\boldsymbol{z}^*\|^2$. Hence, \eqref{boundz} holds.
\end{proof}	

Now, we use the mathematical induction method to prove \eqref{lem22}. When $k=0$, Assumption 3 naturally ensures \eqref{lem22}. We now prove that if \eqref{lem22} holds when $k=k_{1}$ for any $k_{1}\!\!\in\!{\mathbb{N}}$, then \eqref{lem22} holds when $k=k_{1}+1$ by considering the following four steps.
%For convenience, we use $V$ for $V(\boldsymbol{z})$ and $\boldsymbol{e}$ for $\boldsymbol{e}(t)$ in subsequent proof.

\textbf{Step 1:} We prove that for any $t'\in\left(k_{1}T,(k_{1}+1)T\right]$,
\begin{equation}
	\|\boldsymbol{e}(t)\|\!<\! b(t),~t\!\in\!\left(k_{1}T,t'\right)\!\!\Rightarrow\! V(\boldsymbol{z})\!\leq\! a(t),~t\!\in\!\left(k_{1}T,t'\right].\label{step1}
\end{equation}
For any $\beta\in(0,1)$, \eqref{VVV} is rewritten as 
\begin{flalign}
	\dot{V}\!\!\leq\!\!-\!\frac{\beta}{2}\|\!\boldsymbol{F}(\boldsymbol{z})\|^{2}\!\!-\!\!\frac{1\!\!-\!\beta}{2}\|\!\boldsymbol{F}(\boldsymbol{z})\!\|^{2}\!\!+\!\!(2\sigma_{N}^3\!\!+\!\!\frac{11}{2}\sigma_{N}^2)\|\boldsymbol{e}(t)\|^{2}\!\!+\!\!\frac{4V}{3}.\nonumber
\end{flalign}
By inequalities \eqref{definition1} and \eqref{boundz}, one has that
\begin{eqnarray}
	\dot{V}\!\!\leq\!\! -\!\eta V\!\!-\!\Big{[}\frac{1-\beta}{\kappa^2(m_{f}\!+\!6\sigma_{N})}\!\!-\!\frac{4}{3}\Big{]}V\!\!+\!\!(2\sigma_{N}^3\!\!+\!\!\frac{11}{2}\sigma_{N}^2)\|\boldsymbol{e}(t)\|^2.\nonumber
\end{eqnarray}
 For any $t\!\in\!(k_{1}T,t')$, assume $V\!\geq\!a(t)$. Define $V_{1}(t)\!\!=\!Ve^{\eta t}$, then $V_{1}(t)\!\geq\!\! \frac{m_{f}+6\sigma_{N}}{3\sigma_{N}}V(0)$. Combining with $\|\boldsymbol{e}(t)\|\!<\!b(t)$, we obtain $\dot{V}\!\leq\! -\eta V$, which further leads to 
\begin{flalign}
	\dot{V}_{1}(t)\!\!&=\!\!e^{\eta t}(\dot{V}+\eta V)\leq 0.\label{doV1}
\end{flalign} 
Based on the above analysis, $\{z|V(\boldsymbol{z})e^{\eta t}\!\leq\!\frac{m_{f}+6\sigma_{N}}{3\sigma_{N}}V(0)\}$is a positively invariant set. Recalling $V\leq a(k_{1}T)$  at time $k_{1}T$, it follows $V_{1}(k_{1}T)\!\!\leq\!\frac{m_{f}+6\sigma_{N}}{3\sigma_{N} }V(0)$. Hence, $V_{1}(t)\!\leq\!\! \frac{m_{f}+6\sigma_{N}}{3\sigma_{N}}V(0),~t\!\in\!(k_{1}T,t']$. It implies $V\!\leq\! a(t),~t\!\in\!(k_{1}T,t']$.

\textbf{Step 2:} We prove that for any $t' \in[k_{1}T,(k_{1}+1)T)$,
\begin{equation}
V(\boldsymbol{z})\!\leq\! a(t),~t\!\in\!\left(k_{1}T,t'\right]\!\Rightarrow\!\|\boldsymbol{e}(t)\|\!<\!b(t),~t\!\in\!\left(k_{1}T,t'\right].\label{step2}
\end{equation}

From \eqref{compactalgorithm}, it is easy to obtain that
	\begin{eqnarray}
%		\!\!\!\!&=&\!\!\!\left[\begin{matrix}
%			\dot{\boldsymbol{e}}_{\boldsymbol{x}}\\
%			\dot{\boldsymbol{e}}_{\boldsymbol{\lambda}}
%		\end{matrix} \right]\!\!=\! \left[ \begin{matrix}
%			\dot{\boldsymbol{x}}\\
%			\dot{\boldsymbol{\lambda}}
%		\end{matrix} \right],\nonumber\\
		\dot{\boldsymbol{e}}\!=\!\left[ \begin{matrix}
			\!-\!\nabla f(\boldsymbol{x})\!\!+\![\boldsymbol{L}_{\mathcal{G}}  ~\boldsymbol{0}_{Nn}]\boldsymbol{e}\!\!+\![\boldsymbol{0}_{Nn}~\boldsymbol{L}_{\mathcal{G}} ]\boldsymbol{e}\!\!-\!\boldsymbol{L}_{\mathcal{G}} \boldsymbol{x}\!\!-\!\boldsymbol{L}_{\mathcal{G}} \boldsymbol{\lambda}\\
			\boldsymbol{L}_{\mathcal{G}} \boldsymbol{x}\!-\![\boldsymbol{L}_{\mathcal{G}} ~\boldsymbol{0}_{Nn}]\boldsymbol{e}
		\end{matrix} \right].\label{dite}
	\end{eqnarray} 
    Define $\widetilde{\boldsymbol{z}}\!\!=\!\boldsymbol{z}\!\!-\!\boldsymbol{z}^*$ and $\Theta\!\!=\!\nabla f(\boldsymbol{x})\!\!-\!\nabla f(\boldsymbol{x}^*)$. Inserting \eqref{kkt1} into \eqref{dite} yields that
	\begin{eqnarray}
		\dot{\boldsymbol{e}}\!-\!\overline{\boldsymbol{L}}_{\mathcal{G}} \boldsymbol{e}\!=\!\left[\begin{matrix}
			-\boldsymbol{L}_{\mathcal{G}}\widetilde{\boldsymbol{x}}\!-\!\boldsymbol{L}_{\mathcal{G}}\widetilde{\boldsymbol{\lambda}}\!-\!\Theta\\
			\boldsymbol{L}_{\mathcal{G}}\widetilde{\boldsymbol{x}}
		\end{matrix} \right],~\overline{\boldsymbol{L}}_{\mathcal{G}}\!=\!\left[\begin{matrix}
		\boldsymbol{L}_{\mathcal{G}} & \boldsymbol{L}_{\mathcal{G}}\\
		-\boldsymbol{L}_{\mathcal{G}} & \boldsymbol{0}
	\end{matrix} \right], \label{dote2}
	\end{eqnarray}
Integrating both side of \eqref{dote2} from $k_{1}T$ to $t,~t\!\in\!(k_{1}T,t']$ leads to that
	\begin{flalign}
		\boldsymbol{e}(t)&=\boldsymbol{e}(k_{1}T)e^{\int_{k_{1}T}^{t}\overline{\boldsymbol{L}}_{\mathcal{G}}d\tau}\!\!-\!e^{\int_{k_{1}T}^{t}\overline{\boldsymbol{L}}_{\mathcal{G}}d\tau}\int_{k_{1}T}^{t}\left[\begin{matrix}
			\boldsymbol{L}_{\mathcal{G}}\widetilde{\boldsymbol{x}}\!+\!\boldsymbol{L}_{\mathcal{G}}\widetilde{\boldsymbol{\lambda}}\!+\!\Theta\\
			-\boldsymbol{L}_{\mathcal{G}}\widetilde{\boldsymbol{x}}
		\end{matrix} \right]\nonumber\\
		\!\!&~~~e^{\!-\int_{k_{1}T}^{\tau}\overline{\boldsymbol{L}}_{\mathcal{G}}d\tau}d\tau.\label{ett}
	\end{flalign}
Then, taking the norm on both sides of \eqref{ett} gives that
		\begin{flalign}
		\|\boldsymbol{e}(t)\|\!\!&\leq\!\!\|\boldsymbol{e}(k_{1}T)\|\|e^{\overline{\boldsymbol{L}}_{\mathcal{G}}(t\!-\!k_{1}T)}\|\!\!+\!\int_{k_{1}T}^{t}\!\Big{(}\!2\|\boldsymbol{L}_{\mathcal{G}}\widetilde{\boldsymbol{x}}\|\!+\!\|\boldsymbol{L}_{\mathcal{G}}\widetilde{\boldsymbol{\lambda}}\|\!+\!\|\Theta\|\!\!\Big{)}\nonumber\\
		\!\!&~~~\|e^{\overline{\boldsymbol{L}}_{\mathcal{G}}(t-\tau)}\|d\tau.\label{e3}
	\end{flalign}
Next, we analyze the upper bound of each item of \eqref{e3} as
	\begin{flalign}
		\|\boldsymbol{e}(k_{1}T)\|\|e^{\overline{\boldsymbol{L}}_{\mathcal{G}}(t-k_{1}T)}\|\!\!=\!\|\sum_{p=0}^{\infty}[\overline{\boldsymbol{L}}_{\mathcal{G}}(t-k_{1}T)]^{p}\|\|\boldsymbol{e}(k_{1}T)\|\nonumber\\
		\!\leq\!\sum_{p=0}^{\infty}\frac{\|[\overline{\boldsymbol{L}}_{\mathcal{G}}(t-k_{1}T)]^{p}\|}{p!}\|\boldsymbol{e}(k_{1}T)\|\!\leq\!\!e^{\overline{\sigma}_{N}T}\|\boldsymbol{e}(k_{1}T)\|.\nonumber
	\end{flalign}
	Following from $V\leq a(t),~t\in(k_{1}T,t']$ and the conclusion of Proposition 2, one can obtain that
	\begin{flalign}
		&\!\!2\int_{k_{1}T}^{t}\|e^{\overline{\boldsymbol{L}}_{\mathcal{G}}(t-\tau)}\|\|\boldsymbol{L}_{\mathcal{G}}\widetilde{\boldsymbol{x}}\|d\tau
		\!\!\leq\!2\sigma_{N}\!\!\int_{k_{1}T}^{t}\sum_{p=0}^{\infty}\frac{|\overline{\sigma}_{N}(t-\tau)|^{p}}{p!}\|\widetilde{\boldsymbol{x}}\|d\tau\nonumber\\
		&\!\!\leq\!2\sigma_{N}e^{\overline{\sigma}_{N}t}\int_{k_{1}T}^{t}e^{-\overline{\sigma}_{N}\tau}d\tau\int_{k_{1}T}^{(k_{1}+1)T}\sqrt{\frac{2V(\boldsymbol{z})}{\sigma_{N}}}d\tau\nonumber\\
		&\!\!\leq\!\frac{4M_{0}}{\eta\overline{\sigma}_{N}}\sqrt{(m_{f}+6\sigma_{N})\sigma_{N}}(e^{\overline{\sigma}_{N}T}-1)(e^{\frac{\eta}{2}T}-1)e^{-\frac{\eta}{2}(k_{1}+1)T}.\nonumber
	\end{flalign}
	Similar arguments can lead to
	\begin{flalign}
		&\int_{k_{1}T}^{t}\|e^{\overline{\boldsymbol{L}}_{\mathcal{G}}(t-\tau)}\|\|\boldsymbol{L}_{\mathcal{G}}\widetilde{\boldsymbol{\lambda}}\|d\tau\nonumber\\
		&~\!\!\leq\!\frac{2M_{0}}{\eta\overline{\sigma}_{N}}\sqrt{(m_{f}+6\sigma_{N})\sigma_{N}}(e^{\overline{\sigma}_{N}T}\!\!-\!1)(e^{\frac{\eta}{2}T}-1)e^{-\frac{\eta}{2}(k_{1}+1)T}.\nonumber
	\end{flalign}
	The last term of \eqref{e3} can be rewritten as
	\begin{flalign}
		&\int_{k_{1}T}^{t}\|e^{\overline{\boldsymbol{L}}_{\mathcal{G}}(t-\tau)}\|\|\Theta\|d\tau\nonumber\!\leq\!\int_{k_{1}T}^{t}\frac{\sum_{p=0}^{\infty}|\overline{\sigma}_{N}(t-\tau)|^{p}}{p!}\|\Theta\|d\tau,\nonumber\\
		&\!\!\leq\!\frac{1}{\overline{\sigma}_{N}}(e^{\overline{\sigma}_{N}T}-1)\int_{k_{1}T}^{(k_{1}+1)T}m_{f}\|\widetilde{\boldsymbol{x}}\|d\tau\nonumber\\
		&\!\!\leq\!\frac{2M_{0}m_{f}}{\eta\overline{\sigma}_{N}}\sqrt{\frac{m_{f}\!+\!6\sigma_{N}}{\sigma_{N}}}(e^{\overline{\sigma}_{N}T}-1)(e^{\frac{\eta}{2}T}-1)e^{-\frac{\eta}{2}(k_{1}+1)T}.\nonumber
	\end{flalign}
	To proceed, denote by $\Delta_{e}^{k_{1}}=\sup_{k_{1}T<t\leq t'}\|\boldsymbol{e}(t)\|$. Substituting all the above inequalities into \eqref{e3} yields that
	\begin{flalign}
		\Delta_{e}^{k_{1}}\!\!&=\!e^{\overline{\sigma}_{N}T}\|\boldsymbol{e}(k_{1}T)\|\!\!+\!\Big(\frac{6M_{0}}{\eta\overline{\sigma}_{N}}\sqrt{(m_{f}\!\!+\!6\sigma_{N})\sigma_{N}}\!\!+\!\frac{2M_{0}m_{f}}{\eta\overline{\sigma}_{N}}\nonumber\\
		&~~\sqrt{\frac{m_{f}\!\!+\!6\sigma_{N}}{\sigma_{N}}}\Big)(e^{\overline{\sigma}_{N}T}-1)(e^{\frac{\eta}{2}T}-1)e^{-\frac{\eta}{2}(k_{1}+1)T}.\nonumber
	\end{flalign}
Choosing $T$ satisfies \eqref{T}, which implies that
	\begin{flalign}
		&\Big(\frac{6M_{0}}{\eta\overline{\sigma}_{N}}\sqrt{(m_{f}\!\!+\!6\sigma_{N})\sigma_{N}}\!\!+\!\frac{2M_{0}m_{f}}{\eta\overline{\sigma}_{N}}\sqrt{\frac{m_{f}\!\!+\!6\sigma_{N}}{\sigma_{N}}}\Big)\nonumber\\
		&~~(e^{\overline{\sigma}_{N}T}-1)(e^{\frac{\eta}{2}T}-1)e^{-\frac{\eta}{2}(k_{1}+1)T}\leq c_{1}b(k_{1}T).
	\end{flalign}
By noting that the eigenvalue of $\overline{\boldsymbol{L}}_{\mathcal{G}}$ is $\overline{\sigma}_{N}=\sqrt{\frac{3+\sqrt{5}}{2}}\sigma_{N}$ and $e^{\overline{\sigma}_{N}T}\|\boldsymbol{e}(k_{1}T)\|\leq c_{2}b(k_{1}T)$ via \eqref{lem22}, following from $c_{1}+c_{2}<1$, we have $\|\boldsymbol{e}(t)\|\!<\!b(k_{1}T)=b(t),~t\!\in\!(k_{1}T,t'].$ 
	
\textbf{Step 3:} We prove that for any
\begin{flalign}
	\begin{split}
&	\|\boldsymbol{e}(t)\|\!\leq\! b(t),~t\in\left[k_{1}T,(k_{1}+1)T\right),\\
&	\!\Rightarrow\!  \|\boldsymbol{e}((k_{1}+1)T)\|\leq c_{2}e^{-\frac{(1+\sqrt{5})\sigma_{N}T}{2}}b((k_{1}\!+\!1)T).\label{step3}
	\end{split}&
\end{flalign}
Denote the left limit of $\boldsymbol{e}(t)$ as $\boldsymbol{e}^{-}(t)$. For any $t\in[k_{1}T,k_{1}+1)$, there is $\boldsymbol{q}^{\boldsymbol{z}}(t)\!=\!\boldsymbol{q}^{\boldsymbol{z}}(k_{1}T)$ such that
	\begin{flalign}
		&~~\boldsymbol{z}((k_{1}+1)T)-\boldsymbol{q}^{\boldsymbol{z}}(k_{1}T)\nonumber\\
		&=\lim_{t\rightarrow(k_{1}+1)T^{-}}\boldsymbol{z}(t)-\boldsymbol{q}^{\boldsymbol{z}}(k_{1}T)=\boldsymbol{e}^{-}((k_{1}+1)T).\nonumber
	\end{flalign}
	 For any $t\!\in\![k_{1}T,k+1)$, one has that $\|\boldsymbol{e}(t)\|\!<\!b(t)\!=\!b(k_{1}T)$. Then, invoking \eqref{L} yields that
	\begin{flalign}
		\left\|\frac{\boldsymbol{z}((k_{1}+1)T)-\boldsymbol{q}^{\boldsymbol{z}}(k_{1}T)}{\l(k_{1}+1)}\right\|&\leq\left\|\frac{\boldsymbol{e}^{-}((k_{1}+1)T)}{l(k_{1}+1)}\right\|\nonumber\\
		&=\Big{\|}\frac{b(k_{1}T)}{\l(k_{1}+1)}\Big{\|}\leq \frac{L}{2},
	\end{flalign}
that is, the quantizer is unsaturated at time $t\!\!=\!(k_{1}\!+\!1)T$. It further ensures the quantization error satisfying
	\begin{eqnarray}
\|\boldsymbol{e}((k_{1}\!+\!1)T)\|\!\!\leq\!\sqrt{\frac{Nn}{2}}\l(k_{1}\!+\!1)\!=\!c_{2}e^{-\sqrt{\frac{3\!+\!\sqrt{5}}{2}}\sigma_{N}T}b((k_{1}\!+\!1)T).\nonumber
	\end{eqnarray}

\textbf{Step 4:} 
At last, we prove \eqref{lem22} holds for $k=k_{1}+1$ based on \eqref{step1}, \eqref{step2} and \eqref{step3}.

Firstly, we need to prove
\begin{eqnarray}
	\mathop{\text{sup}}\limits_{t}\{k_{1}T<t < (k_{1}+1)T\big{|}\|\boldsymbol{e}(t)\|<b(t)\}=(k_{1}+1)T.\nonumber	
\end{eqnarray}

Recalling the definition of $b(t)$, there is $b(t)=b(k_{1}T)$ for any $t\in\left[k_{1}T,(k_{1}+1)T\right)$. Define the set as follows,
\begin{eqnarray}
	\Omega=\{k_{1}T< t<(k_{1}+1)T\big{|}\|\boldsymbol{e}(t)\|<b(k_{1}T)\}.\label{omega}	
\end{eqnarray}
At time $k_{1}T$, the quantization error satisfies $\|\boldsymbol{e}(k_{1}T)\|< b(k_{1}T)$. Since $\boldsymbol{e}(t)$ is continuous on $t\in[k_{1}T,(k_{1}+1)T)$, there exists $\sup_{t\in{\Omega}}t>k_{1}T$ satisfying $\|\boldsymbol{e}(t)\|< b(k_{1}T)$. Next, we use a contradiction argument to prove $\sup_{t\in{\Omega}}t=(k_{1}+1)T$. Assume that $t_{0}=\sup_{t\in{\Omega}}t,~t_{0}\in(k_{1}T,(k_{1}+1)T)$.

Since $\boldsymbol{e}(t)$ is continuous on $t\in[k_{1}T,(k_{1}+1)T)$, there must be $\|\boldsymbol{e}(t_{0})\|=b(k_{1}T)$ by \eqref{omega}. However,
\begin{flalign}
	\begin{split}
	&~~\|\boldsymbol{e}(t)\|< b(k_{1}T)=b(t),~t\in\left(k_{1}T,t_{0}\right), \nonumber\\
		&\stackrel{\eqref{step1}}{\Rightarrow} V\leq a(t),t\in\left(k_{1}T,t_{0}\right],\nonumber\\
		&\stackrel{\eqref{step2}}{\Rightarrow} \|\boldsymbol{e}(t)\|<b(t)=b(k_{1}T),~t\in\left(k_{1}T,t_{0}\right],
	\end{split}&
\end{flalign}
which contradicts to $\|\boldsymbol{e}(t_{0})\|\!=\!b(k_{1}T)$, then $\sup_{t\in{\Omega}}t\!=\!(k_{1}\!+\!1)T$.

Up to now, we have proved that $\|\boldsymbol{e}(t)\|\!<\!b(t),~t\!\in\!(k_{1}T,(k_{1}+1)T)$. Following \eqref{step1} yields that
$V(t)\leq a(t),~t\in[k_{1}T,(k_{1}+1)T]$. Following \eqref{step3} yields that $\|e((k_{1}+1)T)\|\leq c_{2}e^{-\sqrt{\frac{3+\sqrt{5}}{2}}\sigma_{N}T}b((k_{1}+1)T)$. Thus, \eqref{lem22} holds for $k=k_{1}+1$.  

\bibliographystyle{ieeetr}  
\bibliography{nonconvexquantization}

\end{document}